\newtheorem{theorem}{Theorem}[section]
\newtheorem{corollary}[theorem]{Corollary}
\newtheorem{lemma}[theorem]{Lemma}
\newtheorem{proposition}[theorem]{Proposition}
\theoremstyle{definition}
\newtheorem{remark}[theorem]{Remark}
\newtheorem{example}[theorem]{Example}
\numberwithin{equation}{section}
\newcommand\eq[2]{{\begin{equation}\label{eq:
#1}{#2}\end{equation}}}
\newcommand {\equ}[1] {\eqref{eq: #1}}
\newcommand{\BB}{{\mathcal{B}}}
\newcommand {\new}[1] {\textcolor{blue}{#1}}
\newcommand {\comm}[1] {\textcolor{red}{#1}}
\newcommand{\norm}[1]{{\|{#1}\|}}
\newcommand{\dz}[1]{{\langle{#1}\rangle}}
\newcommand{\Q}{{\mathbb {Q}}}
\newcommand{\R}{{\mathbb{R}}}
\newcommand{\Z}{{\mathbb{Z}}}
\newcommand{\N}{{\mathbb{N}}}
\newcommand{\SL}{\operatorname{SL}}
\newcommand{\spa}{{\rm span}}
\newcommand{\df}{{\, \stackrel{\mathrm{def}}{=}\, }}
\newcommand{\nz}{\smallsetminus\{0\}}
\newenvironment{customthm}[1]
  {\innercustomthm}
  {\endinnercustomthm}
\newcommand\hd{Hausdorff dimension}
\newcommand\ba{badly approximable}
\newcommand\da{Diophantine approximation}
\newcommand\de{Diophantine exponent}
\newcommand\di{Diophantine}
\newcommand\hs{homogeneous space}
\newcommand {\ignore}[1] {}
\newtheorem*{remark*}{Remark}
\begin{document}


\baselineskip=17pt


\title{Singular vectors on manifolds and fractals}

\author{Dmitry Kleinbock}
\address{Department of Mathematics, Brandeis University, 
Waltham MA, USA 02454-9110} 
\email{kleinboc@brandeis.edu}
\author{Nikolay Moshchevitin}
\address{Moscow State University, 
Leninskie Gory 1, Moscow, Russia,   1119991, 
and \linebreak Astrakhan State University,
Tatishcheva 20a,
Astrakhan, Russia, 414056} 
\email{moshchevitin@gmail.com}
\author{Barak Weiss}
\address{Department of Mathematics, Tel Aviv University, Tel Aviv, Israel} 
\email{barakw@post.tau.ac.il}

\date{December 2019}

\maketitle


\renewcommand{\thefootnote}{}

\footnote{2010 \textit{Mathematics Subject Classification}: Primary 11J13; Secondary 11J54, 37A17.}

\footnote{\textit{Key words and phrases}: Diophantine approximation, singular vectors, uniform exponents, divergent trajectories.}

\renewcommand{\thefootnote}{\arabic{footnote}}
\setcounter{footnote}{0}


\begin{abstract}
 We generalize Khintchine's method of constructing totally irrational
 singular vectors and linear forms.  The main result of the paper
 shows existence of totally irrational vectors and linear forms with
 large uniform \di\ exponents on certain subsets of $\R^n$, in
   particular on any analytic submanifold of $\R^n$ of dimension
   $\ge 2$  which is not contained in a proper rational affine
 subspace.
 \end{abstract}

\section{Introduction}\label{intro}
\subsection{Singular vectors and uniform \di\ exponents}
 In this paper we consider uniform rational approximations  to
 $n$-tuples of real numbers. 
Denote by $\dz{x}$    the distance  from $x\in\R$ to the nearest integer, and for
 $\pmb{x} = (x_1,\dots,x_n)\in\R^n$ and  $\pmb{y} = (y_1,\dots,y_n)\in\R^n$  let 
 $$\dz{\pmb{x}}\df \big(\dz{x_1},\dots,\dz{x_n}\big), \ \ \ \
 \norm{\pmb{x}}\df \max_{1\le j \le n}|x_j|, \ \ \ \
 \pmb{x}\cdot\pmb{y}\df x_1y_1+\cdots x_ny_n.$$ 
 
A vector $\pmb{\xi} =  (\xi_1,\dots ,\xi_n)$ is called {\sl singular}
 if  for every 
$c>0$  the system of inequalities
\eq{singvector}{
\norm{\dz{q\pmb{\xi}}} \le c t^{-1/n},\quad 0 < q \le t
}has an integer solution $q$
for any sufficiently large
$t$. Equivalently (in view of  Khintchine's Transference Principle
\cite{H1, C}), $\pmb{\xi}$ is   singular  if for every  
$c > 0$  the system of inequalities
\eq{singform}{
\dz{\pmb{q}\cdot\pmb{\xi}}
\le c t^{-n},\quad 0<\norm{\pmb{q}} \le t}has 
a solution $\pmb{q} \in\Z^n$
for any sufficiently large
$t$. We note  that from Dirichlet's theorem, or, alternatively, from
Minkowski's convex body theorem, it follows that when $c = 1$, for all
$t\ge 1$ both \equ{singvector} and \equ{singform} have integer
solutions. It is well-known that the set of singular real numbers
coincides with $\Q$; thus in what follows we will assume that $n \geq
2$. 

 It was observed by Khintchine, see \cite[Ch.\ V, \S7]{C}
 that 
{singular vectors} form a set of Lebesgue measure zero.  
 \ignore{Let
 \begin{equation*}
\psi_{\pmb{\xi}} (t) \df
\min_{{q} \in \mathbb{Z}_+:\,\,
q\le t}
\,\,\,\,\,
\max_{1\le j \le n}
||\xi_jq ||
\end{equation*}
be the {\sl irrationality measure function related to the simultaneous approximation} to $\xi_1,\dots ,\xi_n$
(where
$||\cdot ||$ stands for the distance to the nearest integer).
Also let
\begin{equation}\label{f1}
\psi_{\pmb{\xi}}^* (t) \df
\min_{\pmb{q} =(q_1,\dots ,q_{n}) \in \mathbb{Z}^n:\,\,
0<\max_{1\le j \le n}|q_j|\le t}
\,\,\,\,\,
||\xi_1q_1+\dots +\xi_n q_n ||
\end{equation}
be the {\sl irrationality measure function related to the dual (linear form) approximation} to $\xi_1,\dots ,\xi_n$.
From Dirichlet's theorem, or, alternatively, from Minkowski's convex body theorem, it follows that for all $t\ge 1 $ one has
 \begin{equation}\label{f3}
\psi_{\pmb{\xi}} (t)\le \frac{1}{t^{{1}/{n}}}\,\,\,\,\,\,
\text{and}\,\,\,\,\,
\psi_{\pmb{\xi}}^* (t)\le \frac{1}{t^n}.
\end{equation}
$\pmb{\xi}$ is called {\sl singular} if \eq{sing}{\lim_{t\to\infty}t^{{1}/{n}}\psi_{\pmb{\xi}} (t) = 0 \quad\Longleftrightarrow\quad \lim_{t\to\infty}t^{n}\psi_{\pmb{\xi}}^* (t) = 0,}
the two statements above being equivalent in view of Khintchine's
transference principle \comm{[reference]}. 
 It was observed by Khintchine \comm{[reference]} that those
 $\pmb{\xi}$ form a set of Lebesgue measure zero.}   
 One reason why singular vectors are an interesting object of study is
 their connection with homogeneous dynamics. It was showed by Dani
 \cite{d} that  
$\pmb{\xi}$ is   singular if and only if the trajectory of a certain lattice in $\R^{n+1}$ constructed from $\pmb{\xi}$ diverges
 (i.e.\ leaves every compact subset of the space of lattices). We will
 not exploit this connection in the present paper; see however
 \cite{KW, W, DFSU} for generalizations and further discussions. In
   particular, the \hd\ of  the set of singular vectors in $\R^n$
     was only relatively recently   shown by Cheung and Chevallier
     \cite{CC} to be equal to $\frac{n^2}{n+1}$; see also an earlier
   work of Cheung \cite{C} settling the case $n=2$.

One can also introduce different `levels of singularity' of vectors
  $\pmb{\xi}\in\R^n$  by considering {exponents of 
uniform
Diophantine approximation. 
Namely, one defines $ \hat{\omega}(\pmb{\xi})$,  the 
{\sl uniform  exponent  of $\pmb{\xi}$ in the sense of simultaneous
  approximation}, as  
the supremum of $\gamma> 0$ for which the system of inequalities    
  $$
\norm{\dz{q\pmb{\xi}}} \le t^{-\gamma},\quad 0 < q \le t 
$$ 
 has an integer solution $ q$ for all $t$ large enough. Likewise,  
  $ \hat{\omega}^*(\pmb{\xi})$, the 
{\sl uniform  exponent  of $\pmb{\xi}$ in the sense of dual
  approximation}, is defined as the supremum of such  $\gamma$ for
which the system of inequalities 
$$
\dz{\pmb{q}\cdot\pmb{\xi}}
\le  t^{-\gamma},\quad 0<\norm{\pmb{q}} \le t 
$$ 
has an integer solution $ \pmb{q}
$ for all $t$ large enough.
  It is clear that always
\begin{equation}\label{u}
 \hat{\omega}(\pmb{\xi})\ge 1/n    \,\,\,\,\,\,\text{and}\,\,\,\,\,\,   
 \hat{\omega}^*(\pmb{\xi})\ge n.
\end{equation}
{In  \cite{DFSU} vectors $\pmb{\xi}$ satisfying $ \hat{\omega}(\pmb{\xi}) > 1/n$ (equivalently,  $\hat{\omega}^*(\pmb{\xi}) > n$) were called {\sl very singular}; clearly very singular implies singular.} See \cite[Theorem 1.3]{DFSU} for an interpretation of the quantities  $ \hat{\omega}(\pmb{\xi})$ and  $ \hat{\omega}^*(\pmb{\xi})$ in terms of the rate of divergence of certain trajectories in the space of lattices.}

\subsection{Theorems of Khintchine and Jarn\'{\i}k } 
Let us say that $\pmb{\xi}\in\R^n$  is {\sl totally irrational} if $1,\xi_1,\dots ,\xi_n$ are  linearly independent  over $\mathbb{Q}$. 
It is easy to see that for not totally irrational vectors $\pmb{\xi}$ one has 
{
$$
\hat{\omega}^*(\pmb{\xi}) = \infty \quad \text{ and }\quad
  \hat{\omega}(\pmb{\xi}) \ge \frac1{n-1};$$ 
  in particular, they clearly are very singular.} On the other hand, 
in \cite{tb}  Jarn\'{\i}k observed that  for   
totally irrational $\pmb{\xi}$ 
one has the upper bound
\eq{upperbound}{
 \hat{\omega}(\pmb{\xi}) \le 1.
}
In a fundamental paper \cite{H1} in the case $ n = 2$ 
 Khintchine  discovered the phenomenon of existence 
of  {very singular} totally irrational vectors.
This was later generalized  by Jarn\'{\i}k  to the setting of systems
of linear forms \cite{J}. The following two theorems constitute a
special case of Jarn\'{\i}k's result. 

\begin{customthm}{A}\label{A}  \it
 There exist  {continuum many 
 totally irrational $\pmb{\xi}  \in \mathbb{R}^n$ such that $\hat{\omega}^*(\pmb{\xi}) = \infty$}.  
\end{customthm}
 
 \begin{customthm}{B}\label{B}  \it
There exist  {continuum many 
 totally irrational $\pmb{\xi}  \in \mathbb{R}^n$ such that $\hat{\omega}(\pmb{\xi}) = 1$}.\end{customthm}

Here we should note that in the case $n=2$  Khintchine 
deduced Theorem \ref{B} from Theorem \ref{A} by means of a transference
argument. However for $n>2$ Jarn\'{\i}k proved Theorem \ref{B}
directly, without using transference. In fact, the transference
argument from Jarn\'{\i}k's paper \cite{tb}, 
 {which can also be found in the monograph by Cassels
\cite[Ch.\ V, \S2, Thm.\ II]{C}, when} 
applied to Theorem \ref{A}  {gives a weaker conclusion  $\hat{\omega}(\pmb{\xi}) \ge \frac1{n-1}$.}

\ignore{A similar general result for arbitrary values of $n$ was
  documented by Jarn\'{\i}k in \cite{J}, where he showed that  
for any $n \ge 2$ under the conditions of Theorems  \ref{A} and \ref{B} 
there exists an uncountable dense subset
of totally irrational vectors $\pmb{\xi}$ 
satisfying respectively (\ref{odin}) or (\ref{dwa}).
(In fact, Jarn\'{\i}k dealt with an even more general situation with
$m$ linear forms in $n$ variables).} 
\smallskip

Further results, generalizations and applications are discussed in
Cassels' book \cite{C} and in a survey by the second-named author
\cite{m}.  
We note that Khintchine's method was used by Dani \cite{d} and later
by the third-named \cite{W} to exhibit   {rapidly} divergent
trajectories of diagonalizable  
semigroups on \hs s of higher rank semisimple Lie groups.

\subsection{Approximation on manifolds and fractals} \
A recurrent theme in \da\ is  the introduction of restrictions on the
vector $\pmb{\xi}$, for instance by imposing a functional dependence between its 
components, or restrictions on the digital expansion of
its coefficients. In other terms, one is interested in the Diophantine
properties of vectors $\pmb{\xi}$ which are known to lie in a certain
subset of $\R^n$, such as a fractal or a smooth submanifold.  
See \cite{BD} for history and references, and \cite{KMa, KLW, K} for 
developments utilizing dynamics on the space of lattices, and in
particular quantitative non-divergence estimates.

As far as 
singular vectors  on fractals or manifolds go, only a few results have been known
until recently. Davenport and Schmidt \cite[Theorem
3]{Davenport-Schmidt} proved that almost all vectors of the form $ 
(x,x^2)$ are not singular. This was later extended to other manifolds
\cite{Ba1, Ba2, DRV, Bu}. 
%
Recall that a smooth submanifold of
      $\R^n$ is called {\sl nondegenerate} if at its Lebesgue-almost
      every point partial derivatives of its parametrizing map up to
      some order span $\R^n$; if $M$ is connected and real analytic,
      this is equivalent to not lying in any proper affine
      subspace (we define real analytic manifolds in \S \ref{an}). Using
      quantitative non-divergence results obtained in 
      \cite{KLW}, two of the authors in \cite[Theorem 1.1]{KW}
 generalized the results of Davenport and Schmidt, proving
that the intersection of the set of singular vectors 
with any smooth nondegenerate manifold has measure zero. They also
showed that on a large class of fractal sets, the set of singular vectors has
measure zero with respect to the Hausdorff measure on the fractal. 

A natural {question}  to ask is whether  the above intersection is in
fact nontrivial, that is, not contained 
in the set of totally irrational vectors.

The only examples of curves on which nontrivial singular vectors have
been exhibited are rational quadrics in $\R^2$  such as the parabola
$\{(x,x^2): x\in\R\}$. This was done by Roy \cite{R1, R2}. 
His result for the parabola was optimal, in the sense that he
  exhibited the least upper bounds for the sets
  $\{\hat{\omega}(\pmb{\xi})\}$ and $\{\hat{\omega}^*(\pmb{\xi})\}$
  where $\pmb{\xi}$ runs through all totally irrational vectors of the
  form $(x,x^2)$.
Optimal results for quadric hypersurfaces in $\R^n$ were  very
recently obtained by  Poëls and Roy \cite{R1u,R2u}, complementing
upper estimates for uniform \de s found earlier by
two of the authors \cite{KM}, see \S \ref{upper}.

\smallskip

For a quite general class of higher-dimensional real analytic
manifolds this question was addressed 
in \cite[Theorem 1.2]{KW}: 

\begin{customthm}{C}\label{C} \it Let ${\mathcal{S}}$ be a connected real analytic
  submanifold   of $\R^n$ of 
dimension at least $2$  which is not contained in any proper rational affine
subspace of $\R^n$. Then there exists a totally irrational singular
vector $\pmb{\xi}  \in {\mathcal{S}}$. 
Moreover, one can find uncountably many  such $\pmb{\xi} $  {with
  \eq{exponentkw}{\hat{\omega}(\pmb{\xi}) \ge \frac{n^2+1}{n(n^2-1)} =
    \frac1n + \frac2{n(n^2-1)}  
.
}}
  \end{customthm}
This was actually done in the context of weighted approximation, see \S\ref{wts}.
The `moreover' part was not written explicitly in \cite{KW}, but can
  be easily derived from  \cite[Corollary 5.2 and Remark 5.4]{KW}. However the
  proof given in \cite{KW} contains a gap, and one of
  the goals of the present paper is to rectify it by providing a
  complete proof of a stronger statement. We will discuss the gap in
  the proof at the end of   \S \ref{pf}. 

\subsection{The main result} 
We now formulate a general result,  which extends
  Theorem~\ref{A} to quite general subsets ${S}\subset\R^n$, and
    from which a stronger version of Theorem  \ref{C} follows. The
    conditions on $S$ will be phrased in terms 
    of its intersections with rational affine hyperplanes.  
If $\mathbf{m} = (m_0, m_1, \ldots,
m_n) \in \Z^{n+1}$ is a primitive vector, 
we will denote 
by  $A_{\mathbf{m}}$ the hyperplane
\eq{eq: def hyperplane}{
  A_{\mathbf{m}} \df \left\{\pmb{\xi} \in \R^n : 
    \sum_{i=1}^n m_i\xi_i = m_0 \right\},}
and write 
\eq{eq: slightly}{
  \left|A_{\mathbf{m}}\right| \df 
{ \|(m_1, \ldots, m_n)\|}
  .}
We will also work with a generalized version of the uniform exponent
for dual approximation. 
Let $\Phi : {\mathbb{Z}}^n{\nz}\to \mathbb{R}_+$ be a proper function, that is
\begin{equation}\label{ccoo}
\text{ the set } \,\ 
 \{ \pmb{q} \in \mathbb{Z}^n:\,\,\, \Phi (\pmb{q} ) \le C\}
\,\,\,
\text{is finite for any}\,\,\, C >0.
\end{equation}
In accordance with $\Phi$ we define the following {\sl irrationality
  measure function} 
\begin{equation}\label{0p0}
\psi_{\Phi,\,\pmb{\xi}} (t) \df
\min_{\pmb{q} 
\in \mathbb{Z}^n\smallsetminus\{\pmb{0}
\},\,\,
\Phi(\pmb{q})\le t}
\,\,\,\,\,
\dz{\pmb{q}\cdot\pmb{\xi}}
.
\end{equation}
For example for
${ \Phi (\pmb{q}) = 
\norm{\pmb{q}}}$
the {function $\psi_{\norm{\cdot},\,\pmb{\xi}}$ can be used to define
  the uniform exponent of $\pmb{\xi}$ in the sense of dual
  approximation:  
\eq{dualexponent}{
  \hat{\omega}^*(\pmb{\xi}) = \sup\left\{ \gamma: \limsup_{t\to
    \infty}t^\gamma \psi_{\norm{\cdot},\,\pmb{\xi}}(t) 
  < \infty
  \right\}.
}

Recall that $S \subset \R^n$ is called {\sl locally closed} if there
is an open set $\mathcal{W}$ such that $S= \overline{ S} \cap
\mathcal{W}$. The following is our main result.
\begin{theorem}\label{thm: abstract}
  Let $S \subset \R^n$  be a nonempty locally closed subset,  let $
  \{L_1, L_2, \ldots\}$ and $
  \{L'_1, L'_2, \ldots\}$ be disjoint collections of distinct closed
  subsets of $S$, each of which is contained in a rational
  affine hyperplane in $\R^n$, and for each $i$ let $A_i$ be a
  rational affine hyperplane containing $L_i$. Assume the following hold:
  \begin{itemize}
  \item[\rm (a)]
    \eq{eq: Ls cover As}{
      \bigcup_{i} L_i \cup \bigcup_{j} L'_j = \{\pmb{x} \in S :
      \pmb{x} \text{ is contained in a rational affine hyperplane} \}.
      }
  \item[\rm (b)]
    For each $i$ and each $T>0$,
    $$ L_i = \overline{\bigcup_{|A_j| > T } L_i \cap L_j};$$
    \item[\rm (c)] For each $i$, and for any finite subsets of indices $F,
      F'$ with $i\notin F$, we have 
     \eq{densityinL}{L_i = \overline{ L_i
        \smallsetminus \left(\bigcup_{k \in F} L_k \cup \bigcup_{k' \in F'} L'_{k'}\right)};}
    \item[\rm (d)]
$\bigcup_i L_i$ is dense in $S$. 
  \end{itemize}
Then 
for arbitrary $\Phi :\mathbb{Z}^n\to \mathbb{R}_+$ satisfying
\eqref{ccoo} and 
for any {non-increasing}
 function $ \varphi:\mathbb{R}_+\to \mathbb{R}_+$,
 there exist uncountably many  totally irrational  $\pmb{\xi}  \in
 {S}$ such that $\psi_{{\Phi,}\pmb{\xi}} (t) \le \varphi (t)\text{ for
   all large enough $t $.}$  
\end{theorem}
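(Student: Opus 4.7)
The plan is to mimic Khintchine's inductive construction of singular vectors, building $\pmb{\xi}$ as the limit of a sequence $\pmb{\xi}_k\in S$, where each $\pmb{\xi}_k$ lies on a rational affine hyperplane $A_{\mathbf{m}_k}\supset L_{i_k}$ of rapidly growing norm, together with a nested sequence of closed balls $B_k=B(\pmb{\xi}_k,r_k)$ with $r_k\downarrow 0$ and $\{\pmb{\xi}\}=\bigcap_k B_k$. Writing $\vq_k=(m_{k,1},\dots,m_{k,n})$, the relation $\vq_k\cdot\pmb{\xi}_k=m_{k,0}\in\Z$ gives the key estimate
\begin{equation*}
  \dz{\vq_k\cdot\pmb{\xi}}\le|\vq_k\cdot(\pmb{\xi}-\pmb{\xi}_k)|\le n\|\vq_k\|r_k \qquad \text{for every } \pmb{\xi}\in B_k.
\end{equation*}
If the induction ensures $n\|\vq_k\|r_k\le\varphi(\Phi(\vq_{k+1}))$, then for any $t$ with $\Phi(\vq_k)\le t<\Phi(\vq_{k+1})$ the vector $\vq_k$ witnesses $\psi_{\Phi,\pmb{\xi}}(t)\le\varphi(\Phi(\vq_{k+1}))\le\varphi(t)$, using that $\varphi$ is non-increasing; together with $\Phi(\vq_k)\to\infty$ this yields the conclusion.

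For the induction, enumerate the countable family $\{L_i\}\cup\{L'_j\}$ as $\{E_1,E_2,\dots\}$, and use (d) to pick $\pmb{\xi}_1\in L_{i_1}$ inside the open set $\mathcal{W}$ for which $S=\overline{S}\cap\mathcal{W}$. Inductively, given $\pmb{\xi}_k\in L_{i_k}$ and $B_{k-1}$: apply (b) to $L_{i_k}$ with threshold $T_k$ chosen large enough that $T_k>|A_{i_\ell}|$ for $\ell\le k$ (making $i_{k+1}$ a new index, and the $i_k$'s pairwise distinct) and, by the properness \eqref{ccoo} of $\Phi$, every $\vq\in\Z^n$ with $\|\vq\|>T_k$ satisfies $\Phi(\vq)>\Phi(\vq_k)$; this yields an index $i_{k+1}$ and a point $\pmb{\xi}'_{k+1}\in L_{i_k}\cap L_{i_{k+1}}$ arbitrarily close to $\pmb{\xi}_k$. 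Next, apply (c) to $L_{i_{k+1}}$ with finite $F,F'$ listing the indices of all $E_j$ among $E_1,\dots,E_{k+1}$ other than $L_{i_{k+1}}$, obtaining $\pmb{\xi}_{k+1}\in L_{i_{k+1}}$ arbitrarily close to $\pmb{\xi}'_{k+1}$ and disjoint from these $E_j$. Finally, choose $r_k>0$ small enough that $B_k=B(\pmb{\xi}_k,r_k)\subset B_{k-1}\cap\mathcal{W}$, $B_k$ misses every $E_j$ ($j\le k$) other than $L_{i_k}$, $n\|\vq_k\|r_k\le\varphi(\Phi(\vq_{k+1}))$, and the closeness parameters in the previous two steps are taken $\le r_k/4$ so that $\pmb{\xi}_{k+1}\in B_k$.

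Since the $i_k$'s are pairwise distinct, for each $E_j$ there is at most one step at which $L_{i_k}=E_j$; for all later $k$, $B_k\cap E_j=\varnothing$, whence $\pmb{\xi}=\lim_k\pmb{\xi}_k\notin E_j$. Condition (a) then forces $\pmb{\xi}$ to lie on no rational affine hyperplane --- i.e., $\pmb{\xi}$ is totally irrational --- while $\pmb{\xi}\in\overline{S}\cap\mathcal{W}=S$, and the approximation inequality follows from the key estimate above. Uncountably many such $\pmb{\xi}$ arise because (c) always supplies an open dense subset of $L_{i_{k+1}}$ from which to pick $\pmb{\xi}_{k+1}$, so a standard Cantor-style branching (choosing at each step two candidate points at distance exceeding the subsequent radii) produces $2^{\aleph_0}$ distinct limits.

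The main obstacle is juggling three competing requirements at every step: large $\|\vq_{k+1}\|$ (so that $\Phi(\vq_k)\to\infty$ and all scales $t$ are covered), small $r_k$ (for the approximation bound), and avoidance of a growing finite list of bad sets $E_j$ (for total irrationality). In particular, $r_k$ depends on $\Phi(\vq_{k+1})$ and cannot be fixed until $\vq_{k+1}$ has been chosen, so the choices must be interleaved; the density supplied by (b) and (c) is precisely what allows one to squeeze $\pmb{\xi}_{k+1}$ inside $B_k$ no matter how small $r_k$ has become.
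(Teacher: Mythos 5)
Your overall strategy is the same as the paper's: a Khintchine-type nested construction in which (d) starts the induction, (b) supplies a new hyperplane of large $\Phi$-value through a point of the current $L_{i_k}$, (c) lets you dodge finitely many of the sets $L_k, L'_{k'}$, and (a) converts that avoidance into total irrationality, with the same key estimate and the same covering-of-scales argument; your Cantor-style branching for uncountability is a legitimate (and slightly more direct) substitute for the paper's argument by contradiction against a countable enumeration of the good points. However, there is a genuine circularity in your inductive step. You impose the smallness condition $n\|\pmb{q}_k\| r_k\le\varphi\big(\Phi(\pmb{q}_{k+1})\big)$ on the radius of $B_k$, which is centered at $\pmb{\xi}_k$, while also requiring $\pmb{\xi}_{k+1}\in B_k$, where $\pmb{\xi}_{k+1}$ is produced from a point $\pmb{\xi}'_{k+1}\in L_{i_k}\cap L_{i_{k+1}}$ ``arbitrarily close to $\pmb{\xi}_k$''. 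But hypothesis (b) only asserts that the union $\bigcup_{|A_j|>T} L_{i_k}\cap L_j$ is dense in $L_{i_k}$: for each $\varepsilon$ there is \emph{some} index $j=j(\varepsilon)$ with an intersection point $\varepsilon$-close to $\pmb{\xi}_k$, but for a \emph{fixed} $j$ the set $L_{i_k}\cap L_j$ need not come close to $\pmb{\xi}_k$ at all. Since $r_k$ cannot be computed before $i_{k+1}$ (hence $\Phi(\pmb{q}_{k+1})$) is fixed, and the permitted closeness $\le r_k/4$ cannot be prescribed before $r_k$ is known, your instruction to retroactively take ``the closeness parameters $\le r_k/4$'' cannot be executed: re-selecting an intersection point at scale $r_k/4$ may force a different index, which changes $\Phi(\pmb{q}_{k+1})$ and hence $r_k$ again, and since $\varphi$ may decay arbitrarily fast this loop need not terminate. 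Concretely, the step fails whenever $\varphi\big(\Phi(\pmb{q}_{k+1})\big)/\big(n\|\pmb{q}_k\|\big)$ is smaller than the distance from $\pmb{\xi}_k$ to $L_{i_k}\cap L_{i_{k+1}}$.

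The repair is to anchor the $\varphi$-smallness on the \emph{next} neighborhood rather than shrinking $B_k$ after the fact, which is exactly how the paper arranges the induction: first fix $i_{k+1}$ (any index supplied by (b) with $\Phi(\pmb{q}_{k+1})>\Phi(\pmb{q}_k)$ and $L_{i_k}\cap L_{i_{k+1}}$ meeting the current neighborhood), observe that the intersection point $\pmb{\xi}'_{k+1}\in L_{i_k}$ satisfies $\pmb{q}_k\cdot\pmb{\xi}'_{k+1}=p_k$ exactly, and then—now that $\Phi(\pmb{q}_{k+1})$ is known—use continuity together with (c) to choose $\pmb{\xi}_{k+1}$ and the ball $B_{k+1}$ inside a small neighborhood of $\pmb{\xi}'_{k+1}$ on which $|\pmb{q}_k\cdot\pmb{\xi}-p_k|<\varphi\big(\Phi(\pmb{q}_{k+1})\big)$ holds. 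Then for $t\in\big[\Phi(\pmb{q}_k),\Phi(\pmb{q}_{k+1})\big)$ the vector $\pmb{q}_k$ witnesses the required inequality for every point of $B_{k+1}$, no lower bound on the radius of $B_k$ is ever needed, and the rest of your argument (avoidance of the sets $E_j$, total irrationality via (a), membership in $S$ via local closedness, and the branching for uncountability) goes through unchanged.
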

An application of Theorem \ref{thm: abstract} to ${ \Phi (\pmb{q}) =
\norm{\pmb{q}}}$,
in view of \equ{dualexponent}, immediately produces
 
 \begin{corollary}\label{manifolds2}    Let $S \subset \R^n$ for which
   there exist collections $\{L_i\}, \, \{L'_j\}, \, \{A_i\}$ satisfying
   the conditions of Theorem \ref {thm: abstract}. Then  
 there exist uncountably many  totally irrational  $\pmb{\xi}  \in
 {S}$ such that
$\hat{\omega}^*(\pmb{\xi}) = \infty$.
 \end{corollary}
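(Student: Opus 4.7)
The plan is to apply Theorem \ref{thm: abstract} directly. Take $\Phi(\pmb{q}) \df \norm{\pmb{q}}$, which is manifestly proper on $\Z^n \smallsetminus \{\pmb{0}\}$ since each norm ball contains only finitely many integer points, so \eqref{ccoo} holds. For $\varphi$, I would choose any non-increasing positive function that decays super-polynomially, for instance $\varphi(t) \df e^{-t}$.

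With these inputs Theorem \ref{thm: abstract} yields uncountably many totally irrational $\pmb{\xi} \in S$ for which $\psi_{\norm{\cdot},\,\pmb{\xi}}(t) \le \varphi(t)$ whenever $t$ is sufficiently large. For any fixed $\gamma > 0$ we then have $t^\gamma \psi_{\norm{\cdot},\,\pmb{\xi}}(t) \le t^\gamma e^{-t} \to 0$ as $t\to\infty$, so the $\limsup$ appearing in \equ{dualexponent} is zero, and in particular finite. Hence $\hat{\omega}^*(\pmb{\xi}) \ge \gamma$; since $\gamma > 0$ was arbitrary, $\hat{\omega}^*(\pmb{\xi}) = \infty$.

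There is no real obstacle to overcome: Corollary \ref{manifolds2} is an immediate consequence of Theorem \ref{thm: abstract}, and the only mildly substantive observation is that a single super-polynomially decreasing choice of $\varphi$ in the theorem is already strong enough to force the dual uniform exponent to be infinite. All the genuine work is concealed in the proof of Theorem \ref{thm: abstract} itself, not in this one-line reduction.
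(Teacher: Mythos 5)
Your proposal is correct and coincides with the paper's intended argument: the paper likewise derives Corollary \ref{manifolds2} by applying Theorem \ref{thm: abstract} with $\Phi(\pmb{q}) = \norm{\pmb{q}}$ and reading off $\hat\omega^*(\pmb{\xi}) = \infty$ from \equ{dualexponent}. Your choice of $\varphi(t)=e^{-t}$ and the explicit $\limsup$ calculation simply spell out the step the paper leaves implicit.
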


From this, a standard 
 transference argument from \cite{tb} and 
 \cite[Ch.\ V, \S2, Thm.\ II]{C} readily gives

\begin{corollary}\label{manifolds3}    Let $S$ be as in Corollary \ref{manifolds2}. Then 
 there exist uncountably many  totally irrational  $\pmb{\xi}  \in {S}$ such that
 $\hat{\omega}(\pmb{\xi})\ge \frac{1}{n-1}$. 
\end{corollary}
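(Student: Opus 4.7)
The plan is to derive Corollary \ref{manifolds3} directly from Corollary \ref{manifolds2} by invoking Jarn\'{\i}k's refinement of Khintchine's transference principle, exactly as the sentence preceding the statement signals.

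First I would apply Corollary \ref{manifolds2} to the given set $S$ to obtain an uncountable collection $\mathcal{E} \subset S$ of totally irrational vectors with $\hat{\omega}^*(\pmb{\xi}) = \infty$. Nothing else about $\pmb{\xi}$ will be used beyond this dual exponent information and the totally irrational property.

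Next I would invoke the transference inequality between the two uniform exponents, originating in Jarn\'{\i}k's paper \cite{tb} and recorded as \cite[Ch.\ V, \S2, Thm.\ II]{C}. In its general form this inequality asserts, for any totally irrational $\pmb{\xi} \in \R^n$, a lower bound on $\hat{\omega}(\pmb{\xi})$ in terms of $\hat{\omega}^*(\pmb{\xi})$ whose value tends to $\tfrac{1}{n-1}$ as $\hat{\omega}^*(\pmb{\xi}) \to \infty$. Applied to each $\pmb{\xi} \in \mathcal{E}$, for which $\hat{\omega}^*(\pmb{\xi}) = \infty$, this immediately yields $\hat{\omega}(\pmb{\xi}) \ge \tfrac{1}{n-1}$, so that $\mathcal{E}$ itself witnesses the conclusion.

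There is essentially no genuine obstacle: the transference step is a cited black box, and total irrationality is preserved because $\mathcal{E}$ was already chosen to consist of such vectors. The only point worth checking is that the limiting value of the transference inequality as $\hat{\omega}^* \to \infty$ is indeed $\tfrac{1}{n-1}$, which is precisely the \emph{weaker conclusion} already noted in the introduction just after Theorem \ref{B} (where the authors explicitly record that transference applied to Theorem \ref{A} gives $\hat{\omega}(\pmb{\xi}) \ge \tfrac{1}{n-1}$ rather than the sharper $\hat{\omega}(\pmb{\xi}) = 1$ of Theorem \ref{B}). Thus no new computation is required beyond locating the precise statement of Jarn\'{\i}k's inequality in the cited references and taking the limit.
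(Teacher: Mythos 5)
Your proposal is correct and matches the paper's own route exactly: the paper derives Corollary \ref{manifolds3} from Corollary \ref{manifolds2} by the very transference argument of Jarn\'{\i}k \cite{tb}, \cite[Ch.\ V, \S2, Thm.\ II]{C} that you invoke, and the limiting bound $\hat{\omega}(\pmb{\xi}) \ge \tfrac{1}{n-1}$ when $\hat{\omega}^*(\pmb{\xi}) = \infty$ is precisely the ``weaker conclusion'' recorded in the discussion following Theorem \ref{B}.
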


We note that 
the above corollary
gives a stronger statement than Theorem \ref{C}, since the exponent  
$\frac{n^2+1}{n(n^2-1)} 
= {\frac1{n-1} - \frac1{n(n+1)}}$ appearing in \equ{exponentkw} is
strictly smaller than $\frac1{n-1}$.

\subsection{Approximation with weights}\label{wts}  One advantage of
the general setup of Theorem \ref{thm: abstract} is the possibility to
extend our results to approximation with weights. 
The weighted setting in Diophantine approximation was initiated 
 by  Schmidt \cite{Sch} and became very popular during recent decades,
 see e.g.\ \cite{Kweights}. 
Consider 
\begin{equation}\label{tw1}
\pmb{s} = (
s_1,\dots ,s_n ) \in (0,1)^n,\,\,\,\,\, s_1+\dots +s_n = 1,
\end{equation}
and put
\begin{equation}\label{tw2}
\rho \df \max_{1\le j \le n} s_j,\,\,\,\,\,
\delta \df \min_{1\le j \le n} s_j.
\end{equation}
Then introduce the {\sl $\pmb{s}$-quasinorm} $\norm{\cdot}_{\pmb{s}}$
on $\R^n$ by $$\norm{\pmb{x}}_{\pmb{s}}\df \max_{1\le j \le
  n}|x_j|^{1/s_j}.$$ 
Clearly $\norm{\pmb{x}}_{\pmb{s}} = \norm{\pmb{x}}^n$ when $\pmb{s} =
\left(\frac1n,\dots,\frac1n \right)$.  
Now we define the 
 {\sl weighted uniform exponent
 $
  \hat{\omega}_{\pmb{s}}(\pmb{\xi})$
 for simultaneous approximation}  as
 the  supremum  of those $\gamma$ for which the system of inequalities
$$
\norm{\dz{q\pmb{\xi}}}_{\pmb{s}} \le t^{-n\gamma},\quad 0 < q \le t 
$$
has a  solution $ {q} \in \mathbb{Z}_+$ for all $t$ large enough, and
the {\sl weighted uniform    exponent} $
 \hat{\omega}^*_{\pmb{s}}(\pmb{\xi})$
of a linear form $\pmb{\xi}$ as the  supremum  of those $\gamma$ for
which the system of inequalities 
$$
{\dz{\pmb{q}\cdot\pmb{\xi}}}
\le  t^{-\gamma},\quad 0< \norm{\pmb{q}}_{\pmb{s}} \le t^n 
$$
has a  solution $ \pmb{q} \in \mathbb{Z}^n$ for all $t$ large enough. 
Analogously to \eqref{u} {and \equ{upperbound}, for totally irrational
  $\pmb{\xi}$} one always has 
\begin{equation*}\label{mo}
 \hat{\omega}_{\pmb{s}}^*(\pmb{\xi})\ge n    \,\,\,\,\,\,\text{and}\,\,\,\,\,\,   \frac{1}{n}\le \hat{\omega}_{\pmb{s}}(\pmb{\xi})\le \frac{1}{\rho n}.
\end{equation*}

Now, in order to construct vectors with large weighted exponents all
one needs is to apply Theorem \ref{manifolds1} to the function 
$$ \Phi_{\pmb{s}}(\pmb{q}) \df \norm{\pmb{q}}_{\pmb{s}}^{1/n}
,$$ observing that 
one has 
 $$ \hat{\omega}_{\pmb{s}}^*(\pmb{\xi}) \df \sup\{ \gamma: \limsup_{t\to \infty}t^\gamma \psi_{ \Phi_{\pmb{s}},\pmb{\xi}}(t)
  < \infty
  \}.$$
This way we arrive at
 \begin{corollary}\label{manifolds4}    Let $S$ be as in Corollary
   \ref {manifolds2}, and let $\pmb{s}$ be as in  \eqref{tw1}. Then  
 there exist uncountably many  totally irrational  $\pmb{\xi}  \in  {S}$ such that 
$\hat{\omega}_{\pmb{s}}^*(\pmb{\xi}) = \infty$.
 \end{corollary}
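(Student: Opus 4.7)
The plan is to reduce Corollary \ref{manifolds4} to Theorem \ref{thm: abstract} applied to the weight function $\Phi_{\pmb{s}}(\pmb{q}) \df \norm{\pmb{q}}_{\pmb{s}}^{1/n}$, precisely as suggested in the paragraph preceding the corollary. Two preliminary checks are needed: (i) $\Phi_{\pmb{s}}$ satisfies the properness condition \eqref{ccoo}, and (ii) the weighted uniform exponent $\hat{\omega}_{\pmb{s}}^*(\pmb{\xi})$ can be recovered from $\psi_{\Phi_{\pmb{s}},\pmb{\xi}}$. Both are immediate from the definitions: the sublevel set $\{\pmb{q} \in \Z^n : \Phi_{\pmb{s}}(\pmb{q}) \le C\}$ coincides with $\{\pmb{q} \in \Z^n : |q_j| \le C^{ns_j}\text{ for every }j\}$ and hence is finite, while the substitution $\norm{\pmb{q}}_{\pmb{s}} \le t^n \Longleftrightarrow \Phi_{\pmb{s}}(\pmb{q}) \le t$ in the defining inequality of $\hat{\omega}_{\pmb{s}}^*$ yields the identity $\hat{\omega}_{\pmb{s}}^*(\pmb{\xi}) = \sup\{\gamma : \limsup_{t\to\infty} t^\gamma \psi_{\Phi_{\pmb{s}},\pmb{\xi}}(t) < \infty\}$.

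With these in hand, to force $\hat{\omega}_{\pmb{s}}^*(\pmb{\xi}) = \infty$ I would choose $\varphi$ to decay faster than every polynomial, for instance $\varphi(t) \df e^{-t}$, which is non-increasing as required. Since $S$ satisfies the hypotheses of Theorem \ref{thm: abstract} by the assumption of Corollary \ref{manifolds2}, applying that theorem with this pair $(\Phi_{\pmb{s}},\varphi)$ produces uncountably many totally irrational $\pmb{\xi} \in S$ such that $\psi_{\Phi_{\pmb{s}},\pmb{\xi}}(t) \le e^{-t}$ for all sufficiently large $t$. For any such $\pmb{\xi}$ and any $\gamma > 0$ one has $t^\gamma \psi_{\Phi_{\pmb{s}},\pmb{\xi}}(t) \le t^\gamma e^{-t} \to 0$, so by the reformulation in (ii) we obtain $\hat{\omega}_{\pmb{s}}^*(\pmb{\xi}) \ge \gamma$, and since $\gamma$ was arbitrary, $\hat{\omega}_{\pmb{s}}^*(\pmb{\xi}) = \infty$.

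There is essentially no obstacle here beyond carefully matching definitions: Theorem \ref{thm: abstract} was stated precisely to take an arbitrary proper $\Phi$ and an arbitrary non-increasing $\varphi$ as inputs, so that weighted exponents, standard sup-norm exponents (as in Corollary \ref{manifolds2}), and any other analogous quantities can be handled uniformly. The substantive content lives entirely inside Theorem \ref{thm: abstract}; Corollary \ref{manifolds4} is a repackaging of that theorem for the specific choice $\Phi = \Phi_{\pmb{s}}$ together with a sufficiently rapidly decaying majorant $\varphi$.
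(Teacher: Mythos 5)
Your proposal is correct and follows exactly the route the paper takes: apply Theorem \ref{thm: abstract} with $\Phi_{\pmb{s}}(\pmb{q})=\norm{\pmb{q}}_{\pmb{s}}^{1/n}$ and a super-polynomially decaying $\varphi$, using the identity $\hat{\omega}_{\pmb{s}}^*(\pmb{\xi})=\sup\{\gamma:\limsup_{t\to\infty}t^\gamma\psi_{\Phi_{\pmb{s}},\pmb{\xi}}(t)<\infty\}$. Your write-up merely makes explicit the routine verifications (properness of $\Phi_{\pmb{s}}$ and the change of variable $\norm{\pmb{q}}_{\pmb{s}}\le t^n\Leftrightarrow\Phi_{\pmb{s}}(\pmb{q})\le t$) that the paper leaves implicit.
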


 {Exact transference theorems 
 for the weighted setting were obtained quite recently. Improving on a paper 
 by Chow, Ghosh, Guan,  Marnat and Simmons \cite{many}, German
 \cite{GG} proved a transference inequality which in particular states
 that 
 $$
 \hat{\omega}_{\pmb{s}}^*(\pmb{\xi}) =\infty\,\,\,\,\,\,
 \Longrightarrow\,\,\,\,\,\,
\hat{\omega}_{\pmb{s}}^*(\pmb{\xi}) \ge   \frac{1}{n(1-\delta)},
$$
where $\delta$
 is defined in (\ref{tw2}). This leads to the following
 }
{\begin{corollary}\label{manifolds5}    Let ${S}$ be as in Theorem
    \ref {thm: abstract}, let $\pmb{s}$ be as in
    \eqref{tw1}, and let $\delta$ be as in \eqref{tw2}. Then  
 there exist uncountably many  totally irrational  $\pmb{\xi}  \in  {S}$ such that
{$\hat{\omega}_{\pmb{s}}(\pmb{\xi})\ge {\frac{1}{n(1-\delta)}}$}. 
\end{corollary}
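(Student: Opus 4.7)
The plan is to derive Corollary \ref{manifolds5} by combining Corollary \ref{manifolds4} with the weighted transference inequality of German \cite{GG} quoted in the paragraph immediately preceding the statement. Since Corollary \ref{manifolds4} has already been established (as an application of Theorem \ref{thm: abstract} to $\Phi_{\pmb{s}}$), almost all of the work is already done; the remaining task is purely a transference argument at the level of individual vectors.

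First, apply Corollary \ref{manifolds4} to the set $S$ and weight vector $\pmb{s}$: this produces an uncountable family $\mathcal{U} \subset S$ of totally irrational vectors $\pmb{\xi}$ satisfying $\hat{\omega}^*_{\pmb{s}}(\pmb{\xi}) = \infty$. Note that total irrationality and the membership $\pmb{\xi} \in S$ are properties of the vector alone and are preserved under any further consideration; in particular the family $\mathcal{U}$ continues to be uncountable when we only remember its elements and not how they were produced.

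Next, fix an arbitrary $\pmb{\xi} \in \mathcal{U}$. By the transference inequality of German \cite{GG} stated above, the hypothesis $\hat{\omega}^*_{\pmb{s}}(\pmb{\xi}) = \infty$ implies
\[ \hat{\omega}_{\pmb{s}}(\pmb{\xi}) \ge \frac{1}{n(1-\delta)}, \]
where $\delta = \min_{1 \le j \le n} s_j$ is as in \eqref{tw2}. (Here we are reading the displayed implication preceding the corollary with the evident correction that the right-hand side refers to the simultaneous, rather than dual, weighted exponent, since otherwise the implication is vacuous.) Applying this to every element of $\mathcal{U}$ yields an uncountable set of totally irrational vectors in $S$ all satisfying the required lower bound on $\hat{\omega}_{\pmb{s}}$, which is the desired conclusion.

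There is no substantial obstacle: the only non-trivial ingredients are Theorem \ref{thm: abstract} (already proved) and the weighted transference of \cite{GG} (cited as a black box). The step that requires a moment of care is simply unpacking the two supremum definitions of $\hat{\omega}^*_{\pmb{s}}$ and $\hat{\omega}_{\pmb{s}}$ so as to verify that German's inequality indeed applies in the limiting regime $\hat{\omega}^*_{\pmb{s}}(\pmb{\xi}) = \infty$; this is automatic because German's bound is an inequality valid for all finite values of the dual exponent and hence passes to the limit.
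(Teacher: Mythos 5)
Your proposal is correct and is precisely the argument the paper intends: Corollary \ref{manifolds5} is obtained by combining Corollary \ref{manifolds4} (uncountably many totally irrational $\pmb{\xi}\in S$ with $\hat{\omega}^*_{\pmb{s}}(\pmb{\xi})=\infty$) with German's weighted transference inequality \cite{GG}, and your reading of the displayed implication as having $\hat{\omega}_{\pmb{s}}$ on the right-hand side matches the evident typo correction. Nothing further is needed.
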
}


\subsection{Applications to manifolds and fractals}\label{mflds} 
We now describe two classes of subsets $S\subset \R^n$ for which  the
assumptions of Theorem \ref {thm: abstract} can be verified.
The first application involves certain product subsets of
$\R^n$. Recall that a subset of $\R$ is called {\sl perfect} if it is
compact and has no isolated points. 
\begin{theorem}\label{thm: fractals}
Let $n \geq 2$ and let $S_1, \ldots, S_n$ be perfect subsets
of $\R$ such that 
\eq{density}{\Q \cap S_k \text{ 
is dense in }S_k\text{  for each }k \in \{1,2\}.} Let $S = \prod_{j=1}^n S_j$.
Then there are collections $\{L_i\}, \,
\{L'_j\}, \, \{A_i\}$ satisfying the hypotheses of Theorem \ref{thm: abstract}.
In particular, the conclusions of Theorem
\ref {thm: abstract} and Corollaries
\ref{manifolds2}---\ref{manifolds5} hold for $S$. 
\end{theorem}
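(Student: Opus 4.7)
The plan is to build $\{L_i\}$ from axis-aligned rational hyperplanes in the first two coordinates, and to relegate every other rational hyperplane section to $\{L'_j\}$. Specifically, for $k\in\{1,2\}$ and $r\in\Q\cap S_k$, set $A_{r,k}\df\{\pmb{x}\in\R^n:x_k=r\}$ and $L_{r,k}\df A_{r,k}\cap S$; let $\{L_i\}$ be the family of all such $L_{r,k}$. For every other rational affine hyperplane $A$ with $A\cap S\neq\vrn$, include $A\cap S$ in $\{L'_j\}$, identifying equal intersections so that the collection consists of distinct sets. The quantitative heart of the construction is that if $r=p/q$ with $\gcd(p,q)=1$, the primitive integer vector representing $A_{r,k}$ is $(p,0,\ldots,0,q,0,\ldots,0)$ with $q$ in slot $k+1$, so $|A_{r,k}|=q$ and arbitrarily large heights are achieved by taking rationals of large denominator.

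Conditions (a) and (d) are then essentially tautological. For (a), any $\pmb{x}\in S$ lying in a rational hyperplane $A$ is in $A\cap S$, which is by construction either an $L_{r,k}$ or a $L'_j$. For (d), given $\pmb{x}=(x_1,\ldots,x_n)\in S$ and $\vre>0$, density of $\Q\cap S_1$ in $S_1$ supplies $r\in\Q\cap S_1$ with $|r-x_1|<\vre$, so the point $(r,x_2,\ldots,x_n)\in L_{r,1}$ approximates $\pmb{x}$.

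Condition (b) is the crux. Fix $L_i=L_{r,1}$, $T>0$, $\pmb{y}=(r,y_2,\ldots,y_n)\in L_i$, and $\vre>0$. Since $S_2$ is perfect and $\Q\cap S_2$ is dense in $S_2$, every non-empty relatively open subset of $S_2$ contains infinitely many rationals, so there exists $r_2=p_2/q_2\in\Q\cap S_2$ in lowest terms with $|r_2-y_2|<\vre$ and $q_2>T$. Then $L_j\df L_{r_2,2}$ belongs to our collection, has $|A_j|=q_2>T$, and contains $(r,r_2,y_3,\ldots,y_n)\in L_i\cap L_j$ within distance $\vre$ of $\pmb{y}$. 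The case $L_i=L_{r,2}$ is symmetric; this is why the density hypothesis \equ{density} is imposed on both $S_1$ and $S_2$.

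Condition (c) is a density statement about products of perfect sets. Because each $S_j$ has at least two points, the affine hull of every $L_{r,k}$ equals $A_{r,k}$; this forces the $L_i$'s to be pairwise distinct and prevents any $L'_j$ from equaling an $L_i$, so the collections are disjoint. For $L_i=L_{r,1}$, any $L_k$ or $L'_{k'}$ distinct from $L_i$ meets $L_i\subset\{r\}\times S_2\times\cdots\times S_n$ in a slice defined by a linear equation in $(x_2,\ldots,x_n)$ that is not identically satisfied on $L_i$. The complement of such a slice is dense in $L_i$: at any $(x_2,\ldots,x_n)\in S_2\times\cdots\times S_n$ one perturbs, within the perfect set $S_j$, the coordinate whose coefficient in the equation is non-zero, to avoid the single forbidden value. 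Iterating over the finite index sets $F$ and $F'$ yields (c). The main obstacle throughout is condition (b); it rests on the identification of $|A_{r,k}|$ with the denominator of $r$ and on the observation that a perfect set whose rational part is dense contains rationals of arbitrarily large denominator near every point.
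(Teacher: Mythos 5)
Your proposal is correct and takes essentially the same approach as the paper: both define $\{L_i\}$ as intersections of $S$ with axis-aligned rational hyperplanes $\{x_k = r\}$ for $k\in\{1,2\}$, relegate all other rational hyperplane sections to $\{L'_j\}$, verify (a) and (d) directly, prove (b) by approximating a coordinate with rationals of large denominator in $S_2$ (or $S_1$), and prove (c) by perturbing within the perfect factors to escape finitely many proper slices. Your treatment is a bit more explicit than the paper's (e.g.\ the affine-hull argument for disjointness of the collections, and spelling out the Baire-type iteration for (c)), but no genuinely different ideas are introduced.
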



For example, the above theorem applies to products of one-dimensional limit sets of
rational iterated function systems such as the middle third Cantor set
and its generalizations. 
Thus as a special case we see
that a Cartesian product of two copies of Cantor's middle thirds set
contains uncountably many totally irrational singular vectors.
The question of determining the Hausdorff dimension of the set of
singular vectors in this fractal was raised in the recent paper
\cite{BCC} of Bugeaud, Cheung and Chevallier, and an upper bound was
obtained by Khalil \cite{Khalil}.

\smallskip

As a second application, let
us consider real analytic submanifolds.  
\begin{theorem}\label{manifolds1}  Let ${\mathcal{S}}$ be a connected real
  analytic submanifold  of $\R^n$ of 
dimension at least $2$  which is not contained in any proper rational affine
subspace of $\R^n$. Then there are collections $\{L_i\}, \,
\{L'_j\}, \, \{A_i\}$ satisfying the hypotheses of Theorem \ref{thm: abstract}.
In particular, the conclusions of Theorem
\ref {thm: abstract} and Corollaries
\ref{manifolds2}---\ref{manifolds5} hold for $\mathcal{S}$. 
\end{theorem}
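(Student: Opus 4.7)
The approach is to exhibit collections $\{L_i\}, \{L'_j\}, \{A_i\}$ satisfying the hypotheses of Theorem \ref{thm: abstract}, from which all desired conclusions follow. Let $d = \dim \mathcal{S}\ge 2$. Since $\mathcal{S}$ is connected, real analytic, and not contained in any proper rational affine subspace of $\R^n$, for every proper rational affine subspace $V\subsetneq \R^n$ the intersection $\mathcal{S} \cap V$ is a proper real analytic subvariety of $\mathcal{S}$ of dimension $<d$. For each connected component $L$ of such an intersection, let $V(L)$ be the smallest rational affine subspace containing $L$; a connectedness argument shows $L$ is also a connected component of $\mathcal{S}\cap V(L)$. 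Call such a pair $(L, V(L))$ \emph{normalized}, noting that $V(L)$ is uniquely determined by $L$. The plan is to take $\{L_i\}$ to be the \emph{maximal} (under inclusion) normalized components of dimension $d-1$—such maxima exist via an ascending-chain argument in which $\dim V(L)$ strictly increases along any proper containment—and $\{L'_j\}$ to consist of all other normalized components (dimension $<d-1$, or non-maximal $(d-1)$-dimensional). For each $L_i$ we pick any rational affine hyperplane $A_i \supset V(L_i)$; this is forced when $V(L_i)$ itself has codimension 1, and otherwise we have considerable freedom.

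Conditions (a) and (d) are immediate. For (a), any $p \in \mathcal{S}$ on a rational hyperplane $A$ lies in a connected component $C$ of $\mathcal{S} \cap A$; normalizing (and then passing to the maximal enclosing top-dimensional component when $\dim C = d-1$) places $p$ in some $L_i$ or $L'_j$. For (d), since $\dim \mathcal{S} \ge 1$ some coordinate $x_k$ is non-constant on $\mathcal{S}$, and for rationals $a/b$ dense in the interval $x_k(\mathcal{S})$ the hyperplane $\{bx_k=a\}$ meets $\mathcal{S}$ in a proper top-dimensional subvariety contributing to $\{L_i\}$, so $\bigcup L_i$ is dense in $\mathcal{S}$. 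The heart of the argument is condition (b). Given $L_i$ and $T>0$, since $\dim L_i \ge 1$ some coordinate $x_k$ is non-constant on $L_i$ (and hence on $\mathcal{S}$). At any $p \in L_i$ where $x_k$ is locally non-constant (such points are dense in $L_i$ by real-analyticity) and any neighborhood $U$ of $p$ in $L_i$, the image $x_k(U)$ is an open interval around $x_k(p)$, hence contains rationals $a/b$ with $b > T$. Any $q\in U$ with $x_k(q) = a/b$ sits on $A = \{bx_k - a = 0\}$ with $|A| = b > T$; the connected component of $\mathcal{S} \cap A$ through $q$ has dimension $d-1$, and after normalization and maximalization yields an $L_j \in \{L_i\}$ with $V(L_j) \subset A$. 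For all but countably many choices of $a/b$ one has $V(L_j)= A$ (a hyperplane), so $A_j = A$ is forced and $|A_j| = b > T$; restricting to such $a/b$ furnishes the required dense family of intersections $L_i \cap L_j$ in $L_i$ with $|A_j|>T$.

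Condition (c) follows from a clean dichotomy. For any $k\ne i$: either $V(L_i) \subset V(L_k)$, in which case minimality of the rational affine span together with maximality of $L_i$ forces $L_i$ itself to be a connected component of $\mathcal{S} \cap V(L_k)$, and the disjointness of distinct components yields $L_i \cap L_k = \vrn$; or $V(L_i) \not\subset V(L_k)$, equivalently $L_i \not\subset V(L_k)$, so that $L_i \cap V(L_k)$ is a proper closed analytic subset of $L_i$ of dimension $<d-1$, hence nowhere dense in $L_i$. The same dichotomy handles each $L'_{k'}$, and finite unions of nowhere dense sets remain nowhere dense. The principal obstacle is the bookkeeping around non-generic components: when $V(L_j)$ has codimension $\ge 2$, the same subset $L_j$ belongs to an infinite pencil of rational hyperplanes of varying norms, yet only one $A_j$ may be committed to. Fortunately this pathology afflicts only countably many rationals $a/b$ in the density argument for any given $L_i$, so restricting to generic $a/b$—where $V(L_j)=A$ is automatic and $A_j=A$ is forced to have norm $b>T$—suffices to verify (b) uniformly in $L_i$ and $T$.
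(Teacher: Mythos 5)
Your construction does not verify hypothesis (c) of Theorem \ref{thm: abstract}, and the failure is exactly the pitfall that Example \ref{example: erroneously} (the gap in \cite{KW}) was designed to expose. Your dichotomy asserts that if $L_i \not\subset V(L_k)$ then $L_i \cap V(L_k)$ has dimension $<d-1$ and is nowhere dense in $L_i$. This is false, because a connected component of $\mathcal{S}\cap A$ need not be irreducible: an entire ``branch'' of it can lie in a smaller affine subspace. Concretely, take $n=3$ and $\mathcal{S} = \{(x,y,xy): x,y \in (-1,1)\}$. For each rational $c$, the set $C_c \df \mathcal{S}\cap\{z = cx\} = \mathcal{S}\cap\bigl(\{x=0\}\cup\{y=c\}\bigr)$ is a connected ``cross'' of two segments meeting at $(0,c,0)$; its minimal rational affine span is the plane $\{z=cx\}$, and it is a maximal normalized component of dimension $d-1=1$, hence a member of your collection $\{L_i\}$. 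But every such cross contains the common segment $\{x=z=0\}\cap\mathcal{S}$, so for $c\ne c'$ the intersection $C_c\cap C_{c'}$ is that whole segment, and $\overline{C_c\smallsetminus C_{c'}}$ is only the horizontal branch of $C_c$: condition (c) fails already with $F$ a single index (and likewise with $L'_{k'}$ equal to the vertical segment, the situation of Example \ref{example: erroneously}). Your case-1 argument does not apply, since neither $V(C_c)\subset V(C_{c'})$ nor the reverse holds. This is precisely why the paper's proof first reduces to a bounded surface which is a graph over $\spa(\mathbf{e}_1,\mathbf{e}_2)$ with convex projection, takes the $L_i$ to be the connected real analytic curves cut out transversally by the coordinate rational hyperplanes, and relegates everything else to closures of one-dimensional basic components: it is irreducibility (being an honest connected analytic curve), not mere connectedness, that makes (c) work.

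There are also two soft spots in your verification of (b). First, the component of $\mathcal{S}\cap A$ through $q$ need not have dimension $d-1$: the hyperplane $\{bx_k = a\}$ can be tangent to $\mathcal{S}$ at $q$ and meet it locally in a single point; this could be repaired by avoiding the (countably many) critical values of $x_k|_{\mathcal{S}}$, but an argument is required. Second, ``for all but countably many choices of $a/b$'' carries no force, since only countably many rationals are available to begin with, so excluding a countable set may exclude them all; and in the example above the claim is in fact false for the natural choice of coordinate, as every level $\{x = a/b\}$ meets $\mathcal{S}$ in a line whose rational affine span is one-dimensional, not a hyperplane. These could perhaps be patched, but the failure of (c) is structural and sinks the proposed choice of collections.
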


\subsection{Optimality of exponents}\label{upper}
 One may wonder whether it is
  possible to strengthen the conclusion of Corollary \ref{manifolds3} 
  and, for $S $ 
 as in Theorem
    \ref {thm: abstract},
construct  totally irrational  $\pmb{\xi}  \in S$ {with $\hat{\omega}(\pmb{\xi}) = 1$},
thereby obtaining an optimal result  {identical} to the conclusion of
Theorem \ref{B} restricted to $S$. However this is not the
case.  
To explain why, we give two examples. First of all we refer to the
paper \cite{KM}, where it is shown that for hypersurfaces of the
  form 
$$\mathcal{S} = \{\pmb{\xi}\in \mathbb{R}^n:   f(\pmb{\xi}) = 1\}\subset \mathbb{R}^n,$$ where $f$ is a  homogeneous polynomial 
of degree $s$ such that 
$
{\#\{\pmb{x}\in\Q^n: f(\pmb{x} )= 0\} < \infty},
$
one has 
$$\sup_{\text{totally irrational }\pmb{\xi} \in \mathcal{S}} \hat{\omega}(\pmb{\xi}) \le H_{n-1,s},
$$
where $ H_{n-1,s}< 1$ is 
the unique positive root of the equation
$ 1-x =x\cdot \sum_{k=1}^{d} \left(\frac{x}{s-1}\right)^k$.
 In particular  for any totally irrational $ \pmb{\xi} $ on the unit sphere 
$$
 \{ 
 (x_1,\dots, x_n)
 :\,\,\,\,\, x_1^2+\dots+x_n^2 = 1\}\subset \R^n
$$
 one has
$ \hat{\omega}(\pmb{\xi}) \le H_{n-1}$,
where $H_{n-1} = H_{n-1,2}$ is the unique positive root of the polynomial \linebreak
$
x^n+
\dots+x - 1.
$
More general results for quadric hypersurfaces, as well as the
optimality of the aforementioned bound,
were very recently proved  
by
Po\"els and Roy 
in 
\cite{R1u}.

In addition to that, in \S \ref{BAD}  below we show that in the
case when $\mathcal{S}$ is a so-called badly approximable affine
subspace of $\R^n$,  the value 
$ \hat{\omega}(\pmb{\xi})$ is uniformly bounded away from $1$ for any
totally irrational  $\xi\in \mathcal{S}$.


\ignore{ \bigskip
It is convenient to recast the above results via so-called {\sl uniform \de s}.
Namely, one defines
  $$
  \hat{\omega}(\pmb{\xi}) \df \sup\{ \gamma: \limsup_{t\to
    \infty}t^\gamma \psi_{\pmb{\xi}}(t) 
 < \infty
  \}
  $$
  and
    $$
  \hat{\omega}^*(\pmb{\xi}) \df \sup\{ \gamma: \limsup_{t\to
    \infty}t^\gamma \psi_{\pmb{\xi}}^*(t) 
  < \infty
  \}.
  $$
  In other words 
  $ \hat{\omega}(\pmb{\xi})$ is the supremum of such  $\gamma$ for which the system of inequalities
  $$
  1\le q\le t,\,\,\,\,\,\,\,
  \max_{1\le j \le n}
  ||\xi_jq||\le t^{-\gamma}
$$ 
for all $t$ large enough has an integer solution $ q \in \N$, and
  $ \hat{\omega}^*(\pmb{\xi})$ is the supremum of such  $\gamma$ for which the system of inequalities
$$
  1\le \max_{1\le j \le n} |q_j|\le t,\,\,\,\,\,\,\,
  || \xi_1 q_1+\dots + \xi_n q_n\|\le t^{-\gamma}
$$ 
for all $t$ large enough has an integer solution $ (q_1,\dots ,q_n) \in \mathbb{Z}$.
  It is clear that always
\begin{equation}\label{u}
 \hat{\omega}(\pmb{\xi})\ge 1/n    \,\,\,\,\,\,\text{and}\,\,\,\,\,\,   
 \hat{\omega}^*(\pmb{\xi})\ge n.
\end{equation}
{See \cite[Theorem 1.3]{DFSU} for an interpretation of the quantities
  $ \hat{\omega}(\pmb{\xi})$ and  $ \hat{\omega}^*(\pmb{\xi})$ in
  terms of the rate of divergence of certain trajectories in the space
  of lattices.}

With this terminology, we can state a simplified version of Theorems
\ref {manifolds1} and   \ref {manifolds2} as follows:

\begin{corollary}\label{manifolds3}    Let $\mathcal{S}$ be as in
  Theorems \ref {manifolds1} and \ref {manifolds2}. Then  
 there exist uncountably many  totally irrational  $\pmb{\xi}  \in
 \mathcal{S}$ such that 
$\hat{\omega}^*(\pmb{\xi}) = \infty$,  and, therefore,
$\hat{\omega}(\pmb{\xi})\ge \frac{1}{n-1}$. 
 \end{corollary}

{Note that the latter inequality follows from the infinitude of
  $\hat{\omega}^*(\pmb{\xi})$ by the same transference argument
  \cite[Ch.\ V, \S2, Thm.\ II]{C}  which we used to derive Theorem
  \ref {manifolds2} from Theorem \ref {manifolds1}.} \comm{Feel free
  to restate and/or add details!} 

\ignore{\subsection{The structure of the paper}
 In the next section we present paper  we give a generalization of
 Theorems  \ref{A} and \ref{B}, which, among other things, will allow
 one to construct totally irrational vectors on submanifolds, thereby
 strengthening and generalizing Theorem~\ref{C}. Namely, we prove
 existence results for singular vectors in some two-dimensional
 subsets of $\mathbb{R}^n$.  
 The idea of the proof in the case of linear form approximation is
 related to original Khintchine's argument, and also utilizes  a
 construction from the paper  \cite{KOM} by Kolome{\color{blue}i}kina
 and the second-named author. The simultaneous approximation case is
 then deduced  
 by a transference argument.
 As a corollary we will also obtain certain results in  the weighted
 setting and in the setting of  inhomogeneous approximation.  }} 




 \section{Proof of Theorem \ref{thm: abstract}}
 The idea of proof goes back to Khintchine's original argument  \cite{H1}  and has
 appeared in many incarnations in work on the subject, see  \cite{m} for a survey.
We retain the notation and assumptions of the
 theorem;  {that is, 
 \begin{itemize}
 \item $S \subset \R^n$  is a nonempty locally closed subset; 
  \item  $\mathcal{L} \df
  \{L_1, L_2, \ldots\}$, $\mathcal{L}' \df
  \{L'_1, L'_2, \ldots\}$ are disjoint collections of distinct closed
  subsets of $S$ such that  conditions (a)--(d) of Theorem \ref{thm: abstract} hold;  \item  $\Phi : {\mathbb{Z}}^n{\nz}\to \mathbb{R}_+$ is such that \eqref{ccoo} holds;  \item $ \varphi:\mathbb{R}_+\to \mathbb{R}_+$ is non-increasing.
  \end{itemize}} Also for a rational affine hyperplane $A_i$ as in the
 statement {of the theorem} we let $\mathbf{m}_i \in \Z^{n+1}$ be a primitive vector so
 that $A_i = A_{\mathbf{m}_i}$, where the notation and normalization are
 as in \equ{eq: def hyperplane}. 


\begin{proof}[Proof of Theorem \ref{thm: abstract}]
Let
 $$\BB \df \left\{\pmb{\xi} \in S: \exists\, t_0 \text{ such that
}  \forall \,t \geq t_0, \, \psi_{\Phi, \pmb{\xi}}(t) \leq \varphi(t)
\text{ and } \pmb{\xi} \text{ is 
  totally irrational} \right\},$$
and suppose by contradiction that $\BB$ is at most countably infinite. Write 
  $\BB = \{\pmb{b}_1, \pmb{b}_2, \ldots \}$ (in case $\BB$ is finite,  this is
  a finite list). Let $\mathcal{W}$ be an open subset of $\R^n$ for
  which $S = \overline{S} \cap \mathcal{W}$.  {Put $\mathcal{U}_0 = \mathcal{W}$, $\pmb{q}_{0} = 0$, $p_{0} = 0$, $i_{0} = 0$, $\Phi(0) = 0$.}
    We will show that for each $\nu \in \N$
    there is
    a bounded open set $\mathcal{U}_\nu \subset \mathcal{W}$, and an index
    $i_\nu \in \N$, such that, with the notation  
    $$ (p_\nu, \pmb{q}_\nu)  \df   \mathbf{m}_{i_\nu},$$
    the following
    conditions are satisfied:  

\begin{enumerate}
\item  {$\varnothing \neq  \overline{S \cap \mathcal{U}_\nu} \subset \mathcal{U}_{\nu-1}$};
\item 
 {
 $i_\nu > i_{\nu-1}$ and  $\Phi(\pmb{q}_\nu) > \Phi(\pmb{q}_{\nu-1})$ for all $\nu\in\N$.}
\item
  For all $k < {\nu}$, $\mathcal{U}_\nu$ is disjoint from $L_k \cup
  L'_k \cup \{\pmb{b}_k \}$.
\item
  For all $\nu\in\N$ and all $\pmb{\xi} \in \mathcal{U}_\nu$ we have
  $$
  |\pmb{\xi} \cdot \pmb{q}_{\nu-1} - p_{\nu-1}| <
  \varphi\big(\Phi(\pmb{q}_\nu) \big). 
$$
\item
  For all $\nu\in\N$, ${\mathcal{U}_\nu \cap
  L_{i_\nu}} \neq \varnothing.$
\end{enumerate}

To see this suffices, take a point
\eq{eq: suffices}{\pmb{\xi} \in S \cap \bigcap_\nu \mathcal{U}_\nu = 
\bigcap_\nu \overline{S \cap\mathcal{U}_\nu}.}
This intersection is nonempty since the right-hand side of \equ{eq:
  suffices} is by (1) an 
intersection of nonempty nested compact sets, and the equality
between both sides of \equ{eq: suffices} follows from the fact that
for $\nu \geq 2$, the sets
$\overline{\mathcal{U}_\nu}$ are contained in $\mathcal{W}$. We will reach a contradiction by
showing that both $\pmb{\xi} \notin \BB$ and $\pmb{\xi} \in \BB$. By
(3), $\pmb{\xi}$ is not equal to any of the $\pmb{b}_i$ and hence $\pmb{\xi}
\notin \BB$. Also by 
(3), $\pmb{\xi}$ is {not} contained in 
any of the sets in the
collections $\mathcal{L}, \mathcal{L}'$, and thus by 
\equ{eq: Ls cover As}, 
$\pmb{\xi}$ is totally  
irrational. The function $\varphi$ is non-increasing by assumption,
and so is the irrationality measure
function $t \mapsto \psi_{\Phi, \pmb{\xi}}(t)$, as follows from its
definition (\ref{0p0}). {The properness condition \eqref{ccoo} guarantees that $\Phi(\pmb{q}_\nu)\to\infty$ as $\nu\to\infty$.}
By (2), for any $t>t_0 \df \Phi(\pmb{q}_1)$ there is $\nu$ with $t \in
\left[\Phi(\pmb{q}_\nu), \Phi(\pmb{q}_{\nu+1}) \right]$  and by (4)  we have
\[
    \psi_{\Phi, \pmb{\xi}}(t) \leq 
\psi_{\Phi, \pmb{\xi}}\big(\Phi(\pmb{q}_{\nu})\big) \leq 
\dz{\, 
  \pmb{q}_\nu \cdot \pmb{\xi}} 
\leq 
| \pmb{q}_\nu
\cdot \pmb{\xi} - p_\nu| < \varphi\big(\Phi(\pmb{q}_{\nu+1}) \big) \leq
\varphi(t).
\]
This shows that 
$\pmb{\xi} \in \BB$. 

Note that when utilizing the above properties, we did not require
property (5). However we will use it for constructing the sequences 
$\mathcal{U}_\nu, \, i_\nu$. 

\smallskip
The inductive
construction starts with $\nu=1$.
{Choose $i_1\df\min\{i\in\N: L_{i} \neq
\varnothing\}$, which exists in view of hypothesis (d),
and} define $\mathcal{U}_1$ to be some open set containing a point in $L_{i_1}$ and such that  $\overline{\mathcal{U}_1}\subset\mathcal{W}$.
Then {(1) and (5)} follow from this choice, and properties (2--4) hold 
vacuously for $\nu=1$.

Now suppose we have constructed $\mathcal{U}_k$ and $i_k$ with the
required properties for $k=1,
\ldots, \nu$, and we explain the construction 
for $\nu+1$. Let $i = i_\nu$. By (5) for $k =\nu$ we have $ \mathcal{U}_\nu \cap
L_{i} \neq \varnothing$. By hypothesis 
(b)  there is an infinite subsequence of indices $j$ such that along
this subsequence,
\eq{eq: nonempty intersection}{
\mathcal{U}_\nu \cap L_i \cap L_j \neq
\varnothing \ \text{ and } \ 
|A_j|\to_{j\to \infty} \infty.}
For each such $j$, write $A_j = A_{\mathbf{m}_j}, \, \mathbf{m}_j = (p'_j,
\pmb{q}'_{j})$. Then by \equ{eq: slightly}, along this subsequence
we have 
$\|\pmb{q}'_j\| \to \infty$, and hence by the property 
(\ref{ccoo}) of $\Phi$, we can 
choose ${j > i}$ 
so that 
$\Phi(\pmb{q}'_j ) >
\Phi(\pmb{q}_\nu)$. We 
then set $i_{\nu+1} = j.$ This choice
ensures that (2) holds for $\nu+1$.
Let
$$
\pmb{\xi}_1 \in \mathcal{U}_\nu \cap L_i \cap L_j. 
$$
The point $\pmb{\xi}_1$ belongs to $L_i$ and hence satisfies $\pmb{\xi}_1
\cdot \pmb{q}_{\nu} = 
p_{\nu} $. By continuity, we can take a small neighborhood
${\mathcal{V} \subset \mathcal{U}_\nu}$ around $\pmb{\xi}_1$, 
so that
for all $\pmb{\xi} \in {\mathcal{V}}$ we have
$$
|\pmb{\xi} \cdot \pmb{q}_{\nu} - p_{\nu}| <
  \varphi\big(\Phi(\pmb{q}_{\nu+1}) \big). 
  $$
  This is the inequality in (4), for $\nu+1$.
  
  Since $\pmb{\xi}_1 \in L_j = L_{i_{\nu+1}}$  we have ${\mathcal{V}} \cap
  L_{i_{\nu+1}} \neq 
  \varnothing$, so we can apply hypothesis (c) to find that there is
$$
\pmb{\xi} \in 
L_j \cap 
 {\mathcal{V}} \smallsetminus
\bigcup_{k < {\nu+1}} \big(L_k \cup 
L'_k \cup \{\pmb{b}_k\}\big). 
$$
Furthermore, we can take a small enough neighborhood
$\mathcal{U}_{\nu+1}$ of $\pmb{\xi}$ so that 
  $$\overline{\mathcal{U}_{\nu+1}} \subset 
   \mathcal{U}_\nu, 
\ \text{ and }  \ \mathcal{U}_{\nu+1} \cap \bigcup_{k < {\nu+1}} (L_k \cup
L'_k \cup \{\pmb{b}_k\}\big) = \varnothing. 
$$
With these choices $\mathcal{U}_{\nu+1}$ and $i_{\nu+1}$ will also
satisfy (1), (3) and (5). Thus we have completed the 
inductive construction.  
    \end{proof}


With Theorem \ref{thm: abstract} in hand, it is easy to complete the
\begin{proof}[Proof of Theorem \ref{thm: fractals}]
Recall that we are given $S = \prod_{j=1}^n S_j$, where $S_1, \ldots, S_n$ are perfect subsets
of $\R$ satisfying \equ{density}.
Let $\mathbf{e}_1, \ldots, \mathbf{e}_n$ be the standard base
vectors, and let
$\{A_i\} $ be the collection of all rational hyperplanes which are normal
to one of $\mathbf{e}_1 , \mathbf{e}_2$ and have nontrivial
intersection with $S$ (where each of the rational
hyperplanes appears exactly once). That is, each of the hyperplanes
$A_i$ is of the form
\eq{eq: for fractals}{
A_i = \left\{ \pmb{\xi} \in \R^{n} : \xi_{k_i} = \frac{p_i}{q_i} \right\},
\text{ where } p_i \in \Z, \ 
q_i \in \N \text{ are coprime, and } k_i \in \{1,2\};} 
note that necessarily we have $\frac{p_i}{q_i}\in S_{k_i}$.

For each $i$ define $L_i \df S \cap A_i$, and let $\{L'_j\}$ denote
the collection of non-empty intersections $S \cap A$, where $A$ is a rational
affine hyperplane, and the set $L'_j$ does not appear in the list
$\{L_i\}$. We claim that with these choices, hypotheses (a)---(d) of
Theorem \ref{thm: abstract} are satisfied. 

Indeed, (a) is obvious from the definition, and (d) follows from \equ{density}. For (b) and (c), suppose for concreteness that $k_i=1$. Then it follows from \equ{eq: for fractals} that
\eq{eq: description}{
  L_i =  \left\{\pmb{\xi}\in \R^{n}: \xi_{1} = \frac{p_i}{q_i} \text{ and } \xi_j \in S_j \  \forall\,j
  \neq 1 \right\}.}
Let $\pmb{\xi} \in L_i$ and let
$p_j/q_j$ be a sequence of distinct rationals in $S_2$ satisfying
$p_j/q_j \to \xi_2$. Such a sequence exists since $S_2$ is perfect and
the rationals are dense in $S_2$. Let  
$$L_j\df  \left\{ \pmb{\xi} \in \R^{n} : \xi_{2} = \frac{p_j}{q_j} \right\}\cap S.$$ 
Then it is clear from \equ{eq: description} that $L_i \cap L_j$
contains 
elements $\pmb{\xi}_j$ such that $\pmb{\xi}_j \to \pmb{\xi}$, and such that
$\pmb{\xi}_j$ differs
from $\pmb{\xi}$ only in the $2$nd 
coordinate. Also $|A_j|  = q_j \to \infty$
and so for any $T>0$, $\pmb{\xi}$ is 
an accumulation point of the sets $L_i \cap L_j$ with $|A_j|>T$. This proves
(b). To show (c), note that because   $S_2,\dots,S_n$ are perfect, the intersection of the set \equ{eq: description} with an arbitrary open subset of $\R^n$ cannot lie in a  union of  finitely many proper affine subspaces of $\R^n$ different from $A_i$; hence \equ{densityinL}.
\end{proof}

\section{Real  analytic submanifolds}\label{an}
Let $k \leq n$, and let $\mathcal{U} \subset \R^k$ be open. We say
that ${f}: \mathcal{U}\to \R^n$
is {\sl real analytic immersion} if it is injective, each of its
coordinate functions ${f}_i :
\mathcal{U} \to \R \ (i=1, \ldots, n)$ is infinitely differentiable, 
the Taylor series of each ${f}_i$ converges in a neighborhood of
every $\pmb{x} \in \mathcal{U}$,  
and the derivative mapping $d_{\pmb{x}} {f}: \R^k \to \R^n$ has
  rank $k$.
By a  {\sl $k$-dimensional real analytic submanifold in
  $\R^n$} we mean a subset $\mathcal{M} \subset \R^n$ such that for
every $\pmb{\xi} \in \mathcal{M}$ there is a neighborhood $\mathcal{V}
\subset \R^n$ containing $\pmb{\xi}$, an open set $\mathcal{U} \subset \R^k$, 
and a real analytic immersion ${f}: \mathcal{U} \to \R^n$ such that
$\mathcal{V} \cap \mathcal{M} = {f}(\mathcal{U}).$ By a real analytic
{\sl curve} (resp., {\sl surface}) we mean a connected one-dimensional (resp., {\sl two-dimensional}) real analytic
submanifold. 
 A mapping ${h} :
\mathcal{M} \to \R^m$ is {\sl real analytic} if for any $\pmb{\xi},
{f}, 
\mathcal{U}$ as above, each coordinate function of ${h} \circ
{f}: \mathcal{U} \to \R^m$ is infinitely differentiable and its Taylor series converges
in some neighborhood of ${f}^{-1}(\pmb{\xi})$.

\ignore{
 Denote by $I_k$ the unit cube $[0,1]^k$ in $\R^k$.
By a $k$-dimensional  {\sl real analytic submanifold} of $\R^n$ we
mean an injective image ${f}(I_k)$, where ${f}$ is a  real
analytic vector-function 
\begin{equation}\label{pprp}
{f} (t_1,\dots ,t_k)
= \big(f_1(t_1,\dots ,t_k),\dots ,f_n(t_1,\dots ,t_k)\big)
\end{equation}
defined in some  bounded open neighborhood $U$ of $I_k$, with
nondegenerate derivative.
In other words, ${f}$ is function for which its Taylor series at
every point $  (t_1,\dots ,t_k)\in U$ 
is well defined and 
converges  in a certain open neighborhood of    $  (t_1,\dots ,t_k)$,
and the derivative mapping $d_xf: \R^k \to \R^n$ has rank $k$ for every $x
\in I_k$. 
The assumption that ${f}$ is an embedding and that the rank of
$d_xf$ is everywhere $k$ is not essential but is
made for the sake of convenience. Namely, if $\bar{{f}}: I_k \to
\R^n$ is analytic, then there is a subcube $J \subset I_k$ such that
$\bar{{f}}|_J$ is injective and $d_x \bar{{f}}$ has constant
rank for every $x \in J$, and by replacing $J$ with its subset of
possibly lower dimension, this rank is equal to $\dim J$.  The map ${f}$ will be referred
to as an {\sl parametrizing map} for the submanifold.

\comm{Make sure this agrees with the definition given in \cite{BD}. If
  not, adjust or explain. Also explain why we deviate from
  Bierstone-Milman.}

We remark that what we have called a real analytic submanifold, is a
special case of what is 
called a {\sl subanalytic set} in the literature on the geometry of such
manifolds. See \cite[\S 2--3]{BM} for an introduction to this
theory. Even when referring to this theory, we will reserve the
term real analytic submanifold for what was 
defined in the preceding paragraph. 
We do not repeat here the definition of a subanalytic set or of its dimension, but
only point out that the fact that real analytic submanifolds (of
dimension $k$) are
subanalytic (and also of dimension $k$), follows easily from  
\cite[Prop. 3.13(3)]{BM}.
Here are some features of subanalytic sets and real analytic
submanifolds which will be useful for us.
}

The crucial property which distinguishes real analytic submanifolds from
smooth manifolds, and follows easily from definitions, is the
following. Let $\mathcal{M}_1, \mathcal{M}_2$ be real
analytic submanifolds (where we equip them with the topology inherited from the
ambient space $\R^n$). Then, if the intersection $\mathcal{M}_1 \cap
\mathcal{M}_2$ has nonempty interior in $\mathcal{M}_1$, then
this intersection is open in $\mathcal{M}_1$; and thus, if additionally
$\mathcal{M}_1$ is connected and $\mathcal{M}_2$ is closed, then
$\mathcal{M}_1 \subset \mathcal{M}_2$.

 A subset 
$\mathcal{N} \subset \mathcal{M}$  is called {\sl semianalytic} if it
is locally described by 
finitely many equalities and inequalities involving real analytic
functions, i.e. for every $\pmb{\xi}_0 \in \mathcal{N}$ there is an open
neighborhood $\mathcal{U}$ containing $\pmb{\xi}_0$ such that 
$$
\mathcal{N} \cap \mathcal{U} = \left\{{\pmb{\xi}} \in \mathcal{M} \cap
  \mathcal{U} : \forall i, \, 
{h}_i({\pmb{\xi}})=0 \text{ and } \forall j, \, \bar{{h}}_j({\pmb{\xi}}) >0
\right\},
$$
for finitely many real analytic functions ${h}_i, \bar{{h}}_j$ on
$\mathcal{M} \cap \mathcal{U}$. For background on the geometry of analytic and
semianalytic manifolds we refer the reader to \cite{BM} 
and the references therein. In particular the reader may consult
\cite{BM} for the definition of the {\sl dimension} of a semianalytic set. 

We will need to decompose
semianalytic subsets into analytic submanifolds. In this regard we
have the following (see \cite[\S 2]{BM}):

\begin{proposition}\label{prop: stratification}
Let $\mathcal{N} \subset \mathcal{M}$ be a semianalytic subset
of a real analytic submanifold $\mathcal{M} \subset \R^n$.  
Then any connected component of $\mathcal{N}$ is semianalytic, and
$\mathcal{N}$ has a locally finite presentation as  
a disjoint union of sets $\mathcal{N}_1,
\mathcal{N}_2, \ldots$, each of which is a connected
analytic submanifold of 
dimension at most $\dim \mathcal{N}$, and such that
\eq{eq: almost disjoint}{
i \neq  j, \ \mathcal{N}_i \cap \overline{\mathcal{N}_j} \neq \varnothing \
\implies \ \dim \mathcal{N}_j > \dim \mathcal{N}_i.
}
  \end{proposition}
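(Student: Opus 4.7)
The plan is to deduce this from the standard Łojasiewicz-type decomposition of semianalytic sets together with a paracompactness argument, essentially following the treatment in \cite[\S 2]{BM}. The statement combines three assertions --- semianalyticity of connected components, the existence of an analytic stratification, and the frontier condition \equ{eq: almost disjoint} --- which I would handle in that order.

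First I would fix $\pmb{\xi}_0 \in \mathcal{N}$ and use the local defining Boolean combination of analytic equalities and inequalities to exhibit, on a small neighborhood $\mathcal{U}$ of $\pmb{\xi}_0$ in $\mathcal{M}$, a finite decomposition of $\mathcal{N} \cap \mathcal{U}$ into connected real analytic submanifolds of $\mathcal{M}$, each of dimension at most $\dim \mathcal{N}$. The key ingredient is the classical result that the zero set of finitely many analytic functions on $\mathcal{M}$ stratifies into analytic submanifolds whose dimensions are controlled by the rank of the defining Jacobians; intersecting with the open sets $\{\bar{h}_j > 0\}$ preserves the property of being an analytic submanifold, and passing to connected components introduces only finitely many pieces per neighborhood because the loci involved are locally the zero sets of finite families of analytic functions. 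Covering $\mathcal{M}$ by a locally finite collection of such neighborhoods and taking a common refinement yields a locally finite partition of $\mathcal{N}$ into connected analytic submanifolds, which is the bulk of the proposition.

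Next I would verify that each connected component of $\mathcal{N}$ is itself semianalytic. Given the partition just constructed, a connected component is a union of pieces obtained by starting from one piece and repeatedly adjoining pieces whose closures meet the current union. Because the partition is locally finite and each piece is semianalytic, any such union is locally a finite union of semianalytic sets and hence semianalytic; this is a routine application of the closure properties of the class of semianalytic sets, which are closed under locally finite unions, intersections, and complements (cf.\ \cite[\S 2]{BM}).

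Finally I would enforce the frontier condition \equ{eq: almost disjoint} by induction on dimension, starting with the top-dimensional strata and proceeding downward: replace each lower-dimensional piece $\mathcal{N}_i$ by the connected components of $\mathcal{N}_i \smallsetminus \bigcup \{\overline{\mathcal{N}_j}\smallsetminus\mathcal{N}_j : \dim \mathcal{N}_j = \dim \mathcal{N}_i\}$, and refine further if $\mathcal{N}_i$ meets $\overline{\mathcal{N}_j}$ in a set that is not all of $\mathcal{N}_i$. Local finiteness is preserved at each step because only finitely many higher-dimensional strata accumulate at any given point, and the process terminates after $\dim \mathcal{M}$ refinement steps. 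The main obstacle, and the reason such a proof is usually just quoted, is bookkeeping: one must preserve simultaneously connectedness of strata, local finiteness of the family, analyticity, and the strict dimension drop in the frontier condition. All of this is carried out in detail in \cite[\S 2]{BM} for the more general subanalytic category, and the semianalytic case treated here is slightly easier.
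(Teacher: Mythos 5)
Your proposal is fine and is in line with how the paper handles this statement: the paper does not prove Proposition \ref{prop: stratification} at all, but quotes it as a known consequence of the theory of semianalytic sets, citing \cite[\S 2]{BM}, which is exactly the source your sketch ultimately defers to for the stratification with the frontier/dimension-drop condition \equ{eq: almost disjoint}. Your outline (local Łojasiewicz-type decomposition, closure properties of semianalytic sets for the connected components, downward induction to enforce the frontier condition) is the standard argument behind that citation, so there is nothing to reconcile beyond noting that the refinement step must also cut higher-dimensional strata that meet closures of lower-dimensional ones --- a detail your appeal to \cite[\S 2]{BM} covers.
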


  
  
\ignore{
In particular by an {\sl analytic  curve} $\pmb{\gamma} $ we mean the
image of  real analytic function 
$$ \pmb{\gamma} (t) =
\big(\gamma_1(t),\dots ,\gamma_n (t)\big)
$$
defined in some  bounded open neighborhood of $[0,1]$.

For a $k$-dimensional analytic submanifold $\mathcal{S}$ and
$\pmb{x}\in \mathcal{S}$ we denote by $T_{\pmb{x}}\mathcal{S}$ the
tangent space to $\mathcal{S}$ at the point $\pmb{x}$. 
So $T_{\pmb{x}}\mathcal{S}$ is a $k$-dimensional affine subspace.

{\color{brown}
( The tangent space is defined at every point where our surface is not singular. From the analyticity condition we see that 
  the set of  such points   has no accumulation points. So the tangent space is defined at all but finitely many points. 
  It means that we can restrict ourselves by  a subcube $ I' \subset I_k$ whis has no singular points  of $\pmb{f}$
  That is why without loss of generality we can suppose that the tangent space   $T_{\pmb{x}}\mathcal{S}$  is defined everywhere.
  }


The open (resp.\ closed) Euclidean ball of radius $\delta$ centered at $\pmb{x}\in\R^n$ will be denoted by $\mathcal {B}_\delta (\pmb{x})$ (resp.\ by $\overline{\mathcal {B}}_\delta (\pmb{x})$).

\begin{lemma}\label{L1}
Suppose that an analytic curve $\pmb{\gamma} $ has infinitely many distinct common points with a certain $(n-1)$-dimensional affine subspace $ A\subset \mathbb{R}^n$.
Then $\pmb{\gamma}  \subset A$.
\end{lemma}
}
\ignore{
\begin{proof} [Proof of Lemma \ref{L1}]
Let $A$ be defined by 
$$
k_1x_1+\dots .+k_nx_n = k_0.
$$
Consider the analytic function
$$
\varphi(t) = 
k_1\gamma_1(t)+\dots .+k_n\gamma_n(t)  - k_0
.
$$
As $\pmb{\gamma} $ has infinitely many distinct points in $A$, the analytic
function $\varphi(t) $ has infinitely many zeroes on $[0,1]$ and hence it is  identically zero.
\end{proof}

By a {\sl rational affine subspace} we mean an affine subspace defined by the equation
\begin{equation}\label{qqww}
m_1x_1+\dots .+m_nx_n =m_0
,
\end{equation}
where $ \mathbf{m} =(m_0,m_1,\dots ,m_n)\in \mathbb{Z}^{n+1}$
is a primitive integer vector. We denote this subspace as
${A}_{\mathbf{m}}$.
If $ \underline{ \mathbf{m} }_0 =(m_{0,1},\dots ,m_{0,n})\in
\mathbb{Z}^{n}$ is a  fixed primitive integer  vector 
in $\mathbb{Z}^n$,
 the 
 union of all  rational affine subspaces
\begin{equation}\label{collek}
{A}_{\mathbf{m} (k,l)},\,\,\,\,\,\,\,
\mathbf{m} (k,l) =  (l,km_{0,1},\dots ,km_{0,n}),\,\,\,\,\,\,\,
{\rm g.c.d.}\, (k,l) = 1
\end{equation}
is dense 
in $ \mathbb{R}^n$. Here we should note that for different pairs $(k,l) $  and
$(k',l') $ the subspaces 
${A}_{\mathbf{m} (k,l)}$ and
${A}_{\mathbf{m} (k',l')}$ do not intersect.

 {\color{brown} We may suppose for simplicity that all the manifolds  $\mathcal{S}$ we consider are contained in  the unit cube  $[-1,1]^n \subset \mathbb{R}^n$.
 Then  if $ \pmb{x} = (x_1,...,x_n) \in \mathcal{S}$ we have $\max |x_j|\le 1$  and so if an affine subspace  ${A}_{\mathbf{m}}$  defined by equation (\ref{qqww}) intersects 
 $\mathcal{S}$, one has
 $
 |m_0|\le n   \cdot \max_{1\le j \le n} |m_j|
 $.
 Recall that in the beginning of Section 1.4  we defined the function $\Phi: \mathbb{Z}^n\to \mathbb{R}$ satisfying (\ref{ccoo}). Put
 $$
 W(M) = 
 \max_{\pmb{q} = (q_1,...,q_n)\in \mathbb{Z}^n: \Phi (\pmb{q}) \le M}
 \,\,
 \max_{1\le j \le n} |q_j|.
 $$
 In the definition  (\ref{0p0}) we deal with the minima of the linear form
 $
 ||\xi_1q_1+...+\xi_nq_n||  
 $
 over the set  
 $\Phi (\pmb{q}) \le t$. 
 This linear form is in fact equal to 
$ |q_0 +\xi_1q_1+...+\xi_nq_n|$ where $ - q_0$ is the corresponding nearest integer.
 In our proofs we need to consider bounds for $|q_0|$ also. 
So for vector $ \mathbf{m} =(m_0,m_1,\dots ,m_n)\in \mathbb{Z}^{n+1}$
 we consider a "shortened"  vector
$ \underline{ \mathbf{m} } =(m_{1},\dots ,m_{n})$ and define
$$
  \overline{\Phi} (\mathbf{m}) = \max \left(   \Phi (\underline{ \mathbf{m} } ), |m_0| \cdot \frac{\Phi (\underline{ \mathbf{m} } )}{nW(\Phi (\underline{ \mathbf{m} } ))}\right). 
  $$
  Now if 
  $$  \Phi (\underline{\mathbf{m}})\le M \,\,\,\,\,\text{and} \,\,\,\,\,{A}_{\mathbf{m}}\cap \mathcal{S} \neq \varnothing,
  $$ 
  we have
  $$
    \overline{\Phi} (\mathbf{m}) \le M.
  $$
Given an analytic curve $\pmb{\gamma} $ in the unit cube and  a positive  $M$
we 
consider the 
 set  of all  rational subspaces 
${A}_{\mathbf{m}}$ with $ \overline{\Phi} (\mathbf{m})\le M$. This set is finite due to (\ref{ccoo}).}
\comm{I already mentioned to Kolya that we have a slight issue here:
  $\Phi$ is defined on $\R^n$ and ${\mathbf{m}}$ has $n+1$ components, we
  need to control the free term too.}

 {\color{brown} We 
apply Lemma \ref{L1} to  divide this set 
 into two classes}
 $
\mathcal{ R}_j( \pmb{\gamma} , M)$, defined as follows:
 $$
\mathcal{ R}_1( \pmb{\gamma} , M)=
\{{A}_{\mathbf{m}}:\,\, {\color{brown}\overline{\Phi} (\mathbf{m})}\le M,\,\,
\pmb{\gamma}  \,\,\text{and}\,\,
{A}_{\mathbf{m}}\,\,
\text{have not more than a finite number of common points}\},
$$
$$
\mathcal{ R}_2( \pmb{\gamma} , M)=
\{{A}_{\mathbf{m}}:\,\,\,  {\color{brown}\overline{\Phi} (\mathbf{m})}\le M,\,\,\,\,
\pmb{\gamma} \subset  {A}_{\mathbf{m}}\}.
$$ 

\begin{lemma}\label{L2}
{For any analytic curve $\pmb{\gamma}$  and for any $M$ there exists
  an analytic curve $ 
\pmb{\gamma}_1 \subset \pmb{\gamma}$  and its neighborhood
$\mathcal{U}\supset \pmb{\gamma}_1$ such that $\mathcal{U}\ \cap
{A}_{\mathbf{m}} 
=\varnothing$ for any ${A}_{\mathbf{m}} \in \mathcal{ R}_1( \pmb{\gamma}
, M)$.}\end{lemma} 
 
\begin{proof} 
Lemma \ref{L2} is an obvious corollary of the definition of  $\mathcal{ R}_1( \pmb{\gamma} , M)$ and the finiteness of  $\mathcal{ R}_1( \pmb{\gamma} , M)$.

\end{proof}

For the rest of this section we take  $\mathcal{S}$ to be a
two-dimensional analytic manifold (i.e.\ a surface) in $\R^n$.  
 
\begin{lemma}\label{L3}
Let   $ \pmb{x}_0 \in \mathcal{S}$ and an $(n-1)$-dimensional affine subspace $A$ be such that  $\pmb{x}_0 \in A$ and the intersection
$A\cap T_{\pmb{x}_0}\mathcal{S}$ is one-dimensional. Then  there exists $\delta > 0$  
 such that the
intersection $A\cap \mathcal{S}\cap \overline{\mathcal {B}}_\delta (\pmb{x}_0)$ is an analytic curve and $ \pmb{x}_0$ is its interior point.\end{lemma}

\begin{proof} 
In $\mathbb{R}^n$ we consider local  affine coordinates 
$(x_1,x_2,x_3,\dots ,x_n)$ such that $\pmb{x}_0$ has coordinates
$(0,0,0,\dots ,0)$, 
the tangent plane $T_{\pmb{x}_0}\mathcal{S}$ is given by
$x_3=x_4=\dots =x_n = 0$, 
and the subspace $A$ has equation $x_2=0$ (the coordinates axes may be
not orthogonal). 
{\color{brown}
Let 
$g: [0,1]^2 \to \mathcal{ S}$  be the parameterizing map and suppose $ g (\pmb{z}_0) = \pmb{x}_0$. Denote by 
$P$  the projection  $(x_1, .., x_n) \mapsto (x_1, x_2)$. The assumption that the tangent space to $\mathcal{S}$  at 
$\pmb{x}_0$  exists and is parallel to the plane $x_3 = ... = x_n=0$  implies that the composition $P \circ g$ is nonsingular at 
$\pmb{z}_0$. Thus by the inverse function theorem  
 } there exists a neighborhood of  $\pmb{x}_0$
such 
that in this neighborhood 
  the surface $\mathcal{S}$ can be given by the equations
$$
x_3= h_3(x_1,x_2),\dots,x_n = h_n (x_1,x_2),
$$
where $\new{h}_j (x_1,x_2), \, 3\le j \le n$, are certain analytic functions.
Then the function
\begin{equation}\label{mapsto}
t\mapsto
\big( t,0, h_3(t,0),\dots ,h_n (t,0)\big)
\end{equation}
determines an analytic curve in a certain neighborhood of the point $t=0$ which coincides with  the intersection of $A$ and $\mathcal{S}$ in this neighborhood.\end{proof}
  
In fact a more general result is valid. 
 
\begin{proposition}\label{moregeneral}
Suppose that {\color{brown} the} conditions of Lemma \ref{L3} are satisfied. Then there exists
 positive $\eta$
 such that for any affine subspace $A'$ parallel to $A$ and  such that 
 ${\rm dist}\, (A, A') \le \eta$ the intersection 
$A'\cap \mathcal{S}\cap  \overline{\mathcal {B}}_\delta (\pmb{x}_0)$  is an analytic curve.
 \end{proposition}

We will denote by
\begin{equation}\label{paaraa}
\frak{C} (A) \df\{ A':\,\, A' \,\,\,
\text{is parallel to}\,\,\, A\,\,\,
\text{and}\,\,\,
{\rm dist}\, (A, A') \le \eta
\}
 \end{equation}
 the collection of parallel affine subspaces close enough to $A$ as constructed in the above Proposition.
 
\begin{proof} 
Instead of (\ref{mapsto}) one should consider the curve
  $$ t\mapsto
\big( t,{\color{brown} \eta_1}, h_3(t,{\color{brown} \eta_1}),\dots ,h_n (t,{\color{brown} \eta_1})\big)$$ with an appropriate 
{\color{brown} $ \eta_1$ with $|{\color{brown} \eta_1}|<\eta$}.
\end{proof}

   {\color{brown}
   Here we should note that the curves  defined by 
  Lemma 2.3 and Proposition 2.4  have a tangent vector defined at every point. This follows from the proof already, since we are constructing an explicit nonsingular parameterizing map. 
 }

\begin{lemma}\label{L4}
Suppose that $\pmb{\gamma} \subset \mathcal{S}$ is an analytic curve. 
Then for any positive integer $M$ there exists a primitive vector
$\mathbf{m}_0 
\in \mathbb{Z}^{n+1}$
with ${\color{brown} \Phi (\underline{\mathbf{m}}_1 )}> M$ 
such that
the intersection ${A}_{\mathbf{m}_0}\cap \pmb{\gamma}$ contains an isolated point {\color{brown}$\pmb{x}_1$} on the curve $\pmb{\gamma}$. In other words, there exists a  positive $\delta$ 
with
$$
\{
{\color{brown}\pmb{x}_1} 
\} =  {A}_{\mathbf{m}_0}\cap \pmb{\gamma}\cap\mathcal{B}_\delta ({\color{brown}\pmb{x}_1} ).$$
 Moreover, the tangent line
 $T_{\pmb{\gamma}}({\color{brown}\pmb{x}_1} )$   is not parallel to  ${A}_{\mathbf{m}_0}$, and
  the intersection
${A}_{\mathbf{m}_0}\cap T_{\pmb{x}_1}\mathcal{S}$ is a one-dimensional affine subspace.
\end{lemma}

\begin{proof} 
{\color{brown} We take an an interior point  $\pmb{x}_0\in \pmb{\gamma}$.
Choose $ v_1$ to be a unit vector in $T_{\pmb{x}_0}{\gamma}$, and   let
$v_2$
be a tangent vector to the surface $\mathcal{S}$ at    $\pmb{x}_0\in \pmb{\gamma}$ }which is orthogonal to $v_1$.
We complete the pair $v_1,v_2$ to a basis of $\mathbb{R}^n$ by vectors $v_3,\dots ,v _n$.
Now consider the affine subspace
{\color{brown}
$$
A =\{ \pmb{x}\in \mathbb{R}^n\,:\,\,
\pmb{x} = \pmb{x}_0+\lambda_2v_2+\dots + \lambda_nv_n,\,\,\, \lambda_2,...,\lambda_n\in \mathbb{R}\}.
$$}
Then   
 $v_1$ is not parallel to $A$,
 and 
$A\cap T_{{\color{brown}\pmb{x}_0}}\mathcal{S}$ is a one-dimensional affine subspace.
By Lemma \ref{L3}
{\color{brown}
$A\cap \mathcal{S}$}
 is an analytic curve in some neighborhood of  
${\color{brown}\pmb{x}_0}$,  which contains ${\color{brown}\pmb{x}_0}$ as an isolated point,
Now Lemma \ref{L4}  follows by continuity,  since the set of all rational affine subspaces is dense in the set of all affine subspaces.
\end{proof}

\comm{Barak's comment: Proof of Lemma 2.5 could be simplified by starting with $v_1$, choosing a linear function $L$ such that $v_1$ is not in the kernel of $L$, with coefficients having gcd $> M$, choosing a rational number $p$ such that $L(x_0)$ is very close to $p$, and setting $A = \{x : L(x) = p\}$. This does not require defining the $v_i$ etc. 
Not sure it is worth the effort to rewrite it in this way.}

\begin{lemma}\label{L5}
Suppose that $\pmb{\gamma} \subset \mathcal{S}$ is an analytic curve, and let $\mathbf{m}_0 
\in \mathbb{Z}^{n+1}$ be the vector constructed in Lemma \ref{L4}.
Then there exists a countable collection 
$$  \{ {A}_{\mathbf{m}}\},
\,\,\,\,\,
\mathbf{m}\in \frak{W} (\pmb{\gamma}  ) ,
$$ 
of affine subspaces of $\R^n$ of the form 
{\rm (\ref{collek})} 
and a positive $\delta $ such that 
\begin{itemize}
\item[\rm ({\bf  i})] 
 for every 
$ \mathbf{m}\in \frak{W} (\pmb{\gamma})$ there exists $\pmb{x}_{\mathbf{m}}\in  \pmb{\gamma}$ such that the  intersection $ {A}_{\mathbf{m}}\cap T_{\pmb{x}_m}\mathcal{S}$   contains  $\pmb{x}_{\mathbf{m}}$  as an isolated point, so that
 ${A}_{\mathbf{m}}\cap T_{\pmb{x}_{\mathbf{m}}}\mathcal{S}$
is a one-dimensional affine subspace  
   with
$$
\{\pmb{x}_{\mathbf{m}}\} =  {A}_{\mathbf{m}}\cap \pmb{\gamma}\cap\overline{\mathcal {B}}_\delta (\pmb{x}_{\mathbf{m}});
$$

\item[\rm ({\bf  ii})]
each intersection
$$
\pmb{\gamma}_{\mathbf{m}} = {A}_{\mathbf{m}} \cap \mathcal{S}\cap\overline{\mathcal {B}}_\delta (\pmb{x}_{\mathbf{m}})
$$
is an analytic curve {\color{brown} which has  a tangent vector everywhere}, 
different  curves
$\pmb{\gamma}_{\mathbf{m}}$ and $\pmb{\gamma}_{\mathbf{m}'}$ with $  \mathbf{m}, \mathbf{m}'\in \frak{W} (\pmb{\gamma} ) $ have empty intersection, and  there exists 
$\pmb{x}\in \pmb{\gamma}$ such that 
the set of all  $\pmb{\gamma}_{\mathbf{m}}$  is dense in $\overline{\mathcal {B}}_\delta (\pmb{x})\new{\,\cap\, \mathcal{S}}$;

\item[\rm ({\bf  iii})] 
$
{\rm angle}\,
 ( T_{\pmb{x}_{\mathbf{m}}}\pmb{\gamma}, T_{\pmb{x}_{\mathbf{m}}}{\pmb{\gamma}_{\mathbf{m}}} ) \ge \delta
$
for any ${\mathbf{m}}\in \frak{W} (\pmb{\gamma}  )$.
  \end{itemize}

\end{lemma}

\begin{proof} 
It follows from a continuity  argument  that for every affine subspace ${A}_{\mathbf{m}}$ from the collection (\ref{collek}) which is close to ${A}_{{\mathbf{m}}_0}$  the intersection
 ${A}_{\mathbf{m}}\cap \pmb{\gamma}$   contains an isolated point $\pmb{x}_{\mathbf{m}}$,
   such that
$$
\{\pmb{x}_{\mathbf{m}}\} =  {A}_{\mathbf{m}}\cap \pmb{\gamma}\cap\overline{\mathcal {B}}_{\delta_{\mathbf{m}} }(\pmb{x}_{\mathbf{m}})
$$
with a positive $\delta_{\mathbf{m}}$; each curve $\pmb{\gamma}_{\mathbf{m}}
$ is analytic in a certain neighborhood of the point  
$\pmb{x}_m$; and each angle between 
$
 T_{\pmb{x}_{\mathbf{m}}}\pmb{\gamma}$ and
 $T_{\pmb{x}_{\mathbf{m}}}{\pmb{\gamma}_{\mathbf{m}}}$ 
 is non-zero.
We may suppose that  $\delta_{\mathbf{m}}$ change continuously.
The angle between $
 T_{\pmb{x}_{\mathbf{m}}}\pmb{\gamma}$ and
 $T_{\pmb{x}_{\mathbf{m}}}{\pmb{\gamma}_{\mathbf{m}}}$ changes continuously
 also. So everything  is uniformly bounded from zero for some
 neighborhood of 
${A}_{\mathbf{m}_0}$  in $ \frak{W} (\pmb{\gamma}  )$, and thus we have
({\bf i}) and ({\bf iii}). The condition ({\bf ii}) follows from the
fact that 
the collection (\ref{collek}) consist of  distinct parallel subspaces
and that it is dense in $\mathbb{R}^n$. 
\end{proof}

 \section{Proof of Theorem \ref{manifolds1}}\label{pf}

}
\ignore{
By a {\sl real analytic arc} we mean the image of $(0,1)$ under an
analytic map $\pmb{f}: I \to \R^n$, where $I$ is an open interval
containing $[0,1]$ and $d_x \pmb{f} \neq 0$ for every $x \in I$. That
is a real analytic arc is the restriction to the open interval
$(0,1)$, of a real analytic submanifold of dimension 1. By a {\sl real
  analytic loop} we mean 
the union
$\gamma$ 
of two real analytic arcs $\gamma_1, \gamma_2$, where $\gamma$ is
homeomorphic to the circle $\mathbb{S}^1$ and each of $\gamma
\smallsetminus \gamma_1, \ \gamma
\smallsetminus \gamma_2$ is a single point. 
By {\sl real
  analytic curve} we mean either a real analytic curve, or a real
analytic loop. Using \cite{spaniards}, one can show that the image of an
analytic manifold of dimension one under an analytic
    immersion is a real analytic curve. 
  }
  }

It will be easier to work with real analytic surfaces than
with manifolds of higher dimension. The reason for this is that in this
case it will be possible to describe a stratification as in
Proposition \ref{prop: stratification}  in topological terms. 
\begin{proposition}\label{prop: two dimensions simpler}
  Let $\mathcal{S}$ be a bounded real analytic surface, and
  let $A$ be an affine hyperplane such that $\mathcal{S} \not \subset
  A$. Denote by $F$ the set of points $\pmb{\xi} \in \mathcal{S} \cap A$ for which
  there does not exist a neighborhood $\mathcal{U}$ of $\pmb{\xi}$ such that
  $\mathcal{U} \cap \mathcal{S} \cap 
  A$ is a real analytic curve. Then $F$ 
  is finite, the number of connected components of $(\mathcal{S} \cap
  A) \smallsetminus F$ is finite, and each of these connected
  components is a real analytic curve. 
  \end{proposition}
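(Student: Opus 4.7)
The plan is to apply Proposition~\ref{prop: stratification} to the semianalytic subset $\mathcal{N} \df \mathcal{S} \cap A$ of $\mathcal{S}$. Note that $\mathcal{N}$ is indeed semianalytic, as it is cut out of $\mathcal{S}$ by the single linear equation defining $A$. First I would establish a dimension bound: since $\mathcal{S}$ is connected (as a real analytic surface) and $\mathcal{S} \not\subset A$, the analytic intersection property recorded in \S\ref{an} (if $\mathcal{M}_1$ is connected, $\mathcal{M}_2$ is closed, and $\mathcal{M}_1 \cap \mathcal{M}_2$ has nonempty interior in $\mathcal{M}_1$, then $\mathcal{M}_1 \subset \mathcal{M}_2$) forces $\mathcal{N}$ to have empty interior in $\mathcal{S}$, so $\dim \mathcal{N} \leq 1$.

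Proposition~\ref{prop: stratification} then yields a locally finite disjoint decomposition $\mathcal{N} = \bigsqcup_i \mathcal{N}_i$ into connected analytic submanifolds $\mathcal{N}_i$ of dimension $0$ or $1$, satisfying the frontier condition \equ{eq: almost disjoint}. The decisive observation is that this condition prohibits any $1$-dimensional stratum from meeting the closure of another stratum in $\mathcal{N}$: if $\dim \mathcal{N}_i = 1$ and $\mathcal{N}_i \cap \overline{\mathcal{N}_j} \neq \varnothing$ for $j \neq i$, then $\dim \mathcal{N}_j > 1$, impossible here. Combined with local finiteness, this means that for every point $\pmb{\xi}$ in a $1$-dimensional stratum $\mathcal{N}_i$, some neighborhood of $\pmb{\xi}$ in $\mathcal{S}$ meets $\mathcal{N}$ only inside $\mathcal{N}_i$, exhibiting $\mathcal{N}$ as a real analytic curve near $\pmb{\xi}$; hence $\pmb{\xi} \notin F$. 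On the other hand, at a $0$-dimensional stratum $\{\pmb{\xi}_0\}$, any neighborhood of $\pmb{\xi}_0$ meets $\mathcal{N}$ in $\{\pmb{\xi}_0\}$ together with portions of one or more $1$-dimensional strata whose closures contain $\pmb{\xi}_0$, so $\mathcal{N}$ is not a real analytic curve near $\pmb{\xi}_0$ and $\pmb{\xi}_0 \in F$. Thus $F$ coincides with the set of $0$-dimensional strata, and the $1$-dimensional strata are precisely the connected components of $(\mathcal{S}\cap A)\setminus F$.

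The remaining and main obstacle is to upgrade local finiteness to genuine finiteness, and this is where the boundedness of $\mathcal{S}$ enters. Since $\mathcal{S}$ is bounded, $\overline{\mathcal{S}}$ is a compact subset of $\R^n$. Covering $\overline{\mathcal{S}}$ by finitely many open sets of $\R^n$ on each of which the stratification restricts to finitely many pieces (which is possible by local finiteness in the ambient analytic structure extending $\mathcal{S}$), one concludes that the family $\{\mathcal{N}_i\}$ is finite. This simultaneously gives finiteness of $F$ and of the number of connected components of $(\mathcal{S}\cap A)\setminus F$, each of which, as already shown, is a real analytic curve. The delicate point I expect to need care in writing up is the reduction of global finiteness to compactness, since $\mathcal{S}$ itself need not be closed in $\R^n$; one handles this by ensuring the semianalytic decomposition is inherited from one on a larger open neighborhood on which $\mathcal{S}$ is locally defined by analytic equations, so that accumulation of strata cannot occur either in $\mathcal{S}$ (by local finiteness) nor on $\overline{\mathcal{S}}\setminus \mathcal{S}$ (by the same local finiteness applied in the ambient extension).
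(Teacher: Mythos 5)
Your skeleton is the same as the paper's (bound $\dim(\mathcal{S}\cap A)\le 1$ by connectedness and the analytic intersection property, stratify via Proposition \ref{prop: stratification}, use boundedness plus local finiteness for finiteness of the strata), but there is a genuine gap at the decisive step where you identify $F$ with the set of $0$-dimensional strata. Your inference ``at a $0$-dimensional stratum $\{\pmb{\xi}_0\}$ \dots\ so $\mathcal{N}$ is not a real analytic curve near $\pmb{\xi}_0$, hence $\pmb{\xi}_0\in F$'' does not follow: the stratification furnished by Proposition \ref{prop: stratification} is not canonical, and nothing prevents it from cutting a perfectly smooth analytic arc by a superfluous point stratum. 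For example, if $\mathcal{S}\cap A$ is locally the segment $\{t\mapsto(t,0,\dots,0):t\in(-1,1)\}$, then the decomposition into the two open half-arcs and the single point $t=0$ satisfies all the conclusions of Proposition \ref{prop: stratification}, including the frontier condition \equ{eq: almost disjoint}, yet that point stratum is \emph{not} in $F$. Your frontier-condition argument does prove the inclusion $F\subset\{\text{point strata}\}$ (so finiteness of $F$ is fine), but the reverse inclusion, and with it your identification of the connected components of $(\mathcal{S}\cap A)\smallsetminus F$ with the $1$-dimensional strata --- which is what you use to conclude that each component is a real analytic curve and that there are finitely many --- is unjustified as written.

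The paper closes exactly this gap: among all decompositions $F_0\cup F_1$ provided by Proposition \ref{prop: stratification} it chooses one minimizing the cardinality of the finite set $F_0$ of point strata, and then shows that a hypothetical $\pmb{\xi}\in F_0\smallsetminus F$ could be absorbed --- near such a point $\mathcal{S}\cap A$ is a curve, the frontier condition forces the adjacent one-dimensional strata into that curve, and merging them with $\pmb{\xi}$ yields a decomposition with fewer point strata, a contradiction; hence $F_0=F$ and the curves are precisely the components. Alternatively, your argument can be repaired without minimality: from $F\subset F_0$ you get that $F$ is finite; directly from the definition of $F$, the set $(\mathcal{S}\cap A)\smallsetminus F$ is open in $\mathcal{S}\cap A$ and is locally a real analytic curve (a neighborhood witnessing $\pmb{\xi}\notin F$ contains no point of $F$), so each of its connected components is a connected one-dimensional real analytic submanifold, i.e.\ a real analytic curve; and since every component contains at least one of the finitely many one-dimensional strata, the number of components is finite. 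One of these two fixes is needed; as stated, the claim that every point stratum lies in $F$ is false.
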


We will refer to the connected components of the set $(\mathcal{S} \cap
  A) \smallsetminus F$ as the {\sl one-dimensional basic components of
    $\mathcal{S} \cap 
    A$}.


  \begin{example}\label{example: erroneously}
Let $n=3$, let $\mathcal{S}$ be 
  defined by
  $$\mathcal{S} = \left\{ \left(x,y, xy \right ) : x,y \in (-1,1) \right \},$$
    and let 
  $$A = \left\{\left(x, y, 0\right) : x,y \in \R\right \}.$$
  Then $A \cap \mathcal{S}$ is the union of a vertical line $\left\{x =
  0\right\}$ and a horizontal
  line  $\left\{y=
  0\right\}$ in the plane $A$, intersecting at the origin
$(0,0,0)$. The set  $F$ defined in Proposition 
  \ref{prop: two dimensions simpler} consists of the origin, and 
  the one-dimensional basic
  components are four open intervals (two horizontal and two vertical)
  in $A$. 
\end{example}

\begin{proof}[Proof of Proposition \ref{prop: two dimensions simpler}]
Write $\mathcal{S}_0 \df \mathcal{S} \cap A$, a semianalytic subset of
$\mathcal{S}$. Clearly $\dim
\mathcal{S}_0 \leq 2$ and we claim that $\dim
\mathcal{S}_0 \neq 2$. Indeed, if this were to hold then $
\mathcal{S}_0$ would be open in $\mathcal{S}$, but also closed since
$A$ is a closed subset of $\R^n$. By connectedness this would imply
$\mathcal{S} \subset A$, contrary to assumption. 

Thus $\dim \mathcal{S}_0 \leq 1$. We treat separately the cases $\dim
\mathcal{S}_0 = 0$ and $\dim
\mathcal{S}_0 =  1$. If $\mathcal{S}$ has dimension $0$, then each
of its connected components is a real analytic submanifold of
dimension $0$, i.e. $\mathcal{S}_0$ is a discrete subset of
$\mathcal{S}$. Moreover $\mathcal{S}_0$ is finite, since the collection described in
Proposition \ref{prop: stratification} is locally finite and
$\mathcal{S}$ is bounded, and by definition $F = \mathcal{S}_0$. 

If $\dim \mathcal{S}_0 =1$, then by
Proposition
\ref{prop: stratification} (and using again that $\mathcal{S}$ is
bounded) we can write
$\mathcal{S}_0$ as a disjoint union
$F_0 \cup F_1$, where $F_0$ is a finite set of points and $F_1$ is a
finite union of disjoint real analytic curves $\mathcal{N}_i$. 
Such a
stratification is not unique, but we choose one so that the
cardinality of $F_0$ is as small as possible. We claim that with this
choice, $F_0 = F$ and the real analytic curves $\mathcal{N}_i$ are the connected
components of $\mathcal{S}_0 \smallsetminus F$. 

To see this, note that since the $\mathcal{N}_i$ are real analytic
curves, any point in any one of the $\mathcal{N}_i$ 
cannot belong to the set $F$, so $F \subset F_0.$ Suppose if possible
that there is some ${\pmb{\xi}} \in F_0 
\smallsetminus F$. 
Since ${\pmb{\xi}} \notin F$, it is not an
isolated point of $\mathcal{S}_0$. Thus, if we denote by
$F_1({\pmb{\xi}})$  the 
collection of curves
$\mathcal{N}_i$ for which ${\pmb{\xi}} \in
\overline{\mathcal{N}_i}$, then $F_1({\pmb{\xi}} ) \neq
\varnothing$.

Let $\eta$ be the connected component of $\mathcal{S}_0 \smallsetminus
F$ containing ${\pmb{\xi}}$. Then $\eta$ is a real analytic curve. By
the connectedness of $\eta$ and property \equ{eq: almost disjoint}, any
$\mathcal{N}_i$ in the collection $F_1({\pmb{\xi}} )$ must be contained in
$\eta$. Since ${\pmb{\xi}}$ is a smooth point of $\eta$, 
i.e.\ there is a neighborhood $W$ of $\xi$ such that $W \cap \mathcal{S}_0  = W \cap \eta$,
it follows that $F_1({\pmb{\xi}})$ consists of two real analytic curves $\mathcal{N}_i,
\mathcal{N}_j$ such that the union $\gamma \df \mathcal{N}_i \cup \{{\pmb{\xi}}\} \cup
\mathcal{N}_j$ is  
also a real analytic curve contained in $\eta$.
We can therefore modify 
$F_0$ and $F_1$, by replacing $F_0, \, F_1 $  respectively with
$$F_0 \smallsetminus
\{{\pmb{\xi}}\} \ \ \text{ and } \ F_1 \cup \{\pmb{\xi}\} = F_1 \cup \gamma
\smallsetminus \left(\mathcal{N}_i \cup \mathcal{N}_j\right).$$
But
this contradicts the minimality of $F_0$, showing that $F_0 =
F$. Since by \equ{eq: almost disjoint} any boundary point of any
$\mathcal{N}_i$ is in $F$, the $\mathcal{N}_i$ are open and closed as
subsets of $\mathcal{S}_0 \smallsetminus F$. Thus they coincide with
the connected components of $\mathcal{S}_0 \smallsetminus F$.
  \end{proof}
\medskip

In order to be in a position to apply Proposition \ref{prop: two dimensions simpler} we will
need the following.

\begin{proposition}\label{prop: reducing dimensions}
Let $k \geq 2$, and let $\mathcal{M} \subset \R^n$ be a connected $k$-dimensional
real analytic submanifold which
is not contained in a proper rational affine subspace of $\R^n$. Then
$\mathcal{M}$ contains a   bounded real analytic surface
which is not contained in a proper rational affine subspace of
$\R^n$. 
  \end{proposition}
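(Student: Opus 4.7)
The plan is to first locate a single point $\pmb{\xi}_0 \in \mathcal{M}$ lying outside every rational affine hyperplane, and then to produce $\mathcal{S}$ as a small $2$-dimensional slice of $\mathcal{M}$ through $\pmb{\xi}_0$ inside a local chart.

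To find $\pmb{\xi}_0$, I would pick any real analytic chart $f : \mathcal{U} \to \mathcal{M}$ with $\mathcal{U} \subset \R^k$ open and connected, and consider, for each primitive $\mathbf{m} = (m_0, m_1, \ldots, m_n) \in \Z^{n+1}$ with $(m_1, \ldots, m_n) \neq 0$, the real analytic function $g_{\mathbf{m}} \df m_1 f_1 + \cdots + m_n f_n - m_0$ on $\mathcal{U}$. If $g_{\mathbf{m}}$ were identically zero on $\mathcal{U}$, then by the identity principle for real analytic functions on a connected real analytic manifold (applied along paths in $\mathcal{M}$ covered by finitely many charts), the entire manifold $\mathcal{M}$ would lie in $A_{\mathbf{m}}$; since every proper rational affine subspace is contained in some rational affine hyperplane, this would contradict the hypothesis. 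Hence each $Z_{\mathbf{m}} \df g_{\mathbf{m}}^{-1}(0)$ is a proper real analytic subset of $\mathcal{U}$ and, as such, has Lebesgue measure zero in $\R^k$. Because the collection of admissible $\mathbf{m}$ is countable, I can select $\pmb{t}_0 \in \mathcal{U} \smallsetminus \bigcup_{\mathbf{m}} Z_{\mathbf{m}}$ and set $\pmb{\xi}_0 \df f(\pmb{t}_0)$.

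Next, I would choose any $2$-dimensional linear subspace $P \subset \R^k$ and a bounded open ball $D \subset \pmb{t}_0 + P$ centered at $\pmb{t}_0$ with $\overline{D} \subset \mathcal{U}$, and set $\mathcal{S} \df f(D)$. Since $d_{\pmb{t}} f$ has rank $k$ by hypothesis, its restriction to $P$ is injective for every $\pmb{t} \in D$; combined with the injectivity and analyticity of $f$, this shows that $f|_D$ is a real analytic immersion of a $2$-dimensional open set into $\R^n$, and hence $\mathcal{S}$ is a bounded real analytic surface in the sense of \S \ref{an}. Finally, $\pmb{\xi}_0 \in \mathcal{S}$ and $\pmb{\xi}_0 \notin A_{\mathbf{m}}$ for every admissible $\mathbf{m}$, so $\mathcal{S}$ cannot be contained in any rational affine hyperplane, and hence not in any proper rational affine subspace of $\R^n$.

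The only nontrivial ingredient is the identity principle invoked to rule out $g_{\mathbf{m}} \equiv 0$ on $\mathcal{U}$; the rest is elementary linear algebra together with a measure-theoretic selection. The case $k = 2$ is the degenerate situation $P = \R^2$ where $D$ can simply be taken to be a small bounded open neighborhood of $\pmb{t}_0$ in $\mathcal{U}$ itself.
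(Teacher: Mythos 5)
Your proof is correct, and it takes a genuinely different route from the paper's. The paper proceeds by induction on $k$, reducing from dimension $k$ to $k-1$ via Lemma~\ref{wW}, whose proof is a Baire-category argument: assuming every horizontal slice $f_\alpha(I_{k-1})$ lies in some rational affine hyperplane, it deduces from \equ{nill} that one of the countably many closed preimages $f^{-1}(A_{\mathbf m}\cap\mathcal M)$ must have nonempty interior, whence $\mathcal M\subset A_{\mathbf m}$ by analyticity. You instead skip the inductive descent entirely: inside a single chart you observe that each zero set $Z_{\mathbf m}$ of a not-identically-vanishing real analytic function has Lebesgue measure zero, so the countable union misses some point $\pmb t_0$, and any two-dimensional affine slice of the chart through $\pmb t_0$ does the job. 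Both arguments use the same identity principle (a real analytic function vanishing on an open subset of a connected manifold vanishes identically, so $g_{\mathbf m}\not\equiv 0$); the difference is that you replace the paper's ``the bad set is meager'' by ``the bad set is null,'' and extract a single totally irrational point rather than inductively shaving off a coordinate. Your version handles all $k\ge 2$ at once and is arguably the more economical of the two, at the small cost of invoking the measure-zero property of analytic zero sets.

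One point worth making explicit (you implicitly rely on it): $\mathcal S = f(D)$ is a real analytic surface in the sense of \S\ref{an} because $f|_{\overline D}$ is a continuous injection from a compact set into a Hausdorff space and hence a homeomorphism onto its image, so $f|_D$ is a topological embedding and not merely an injective immersion; together with the rank-$2$ condition on $d_{\pmb t}f|_P$ this yields the required local charts for $\mathcal S$. With that detail spelled out, the argument is complete.
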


Proposition \ref{prop: reducing dimensions} is proved by 
induction on the dimension $k$, the base case $k=2$ being obvious. For
$k \geq 3$, the
deduction of case $k$ from case $k-1$ follows from
the observation that any proper affine rational subspace of $\R^n$ is contained
  in a  rational affine hyperplane, 
and from the following. For each $k$ we denote $I_k \df (0,1)^k$ the
open unit $k$-dimensional cube. 

\begin{lemma}\label{wW}
Suppose that  for $ k \ge 3$, $\mathcal{M}$ is a $k$-dimensional  real
analytic submanifold which is the
image of $I_k$ under a real analytic
immersion $f:I_k \to \R^n$. Suppose also that $\mathcal{M}$
is not contained in any rational affine hyperplane. Then
there exists $\alpha \in (0,1)$ such that the 
analytic manifold $f_\alpha(I_{k-1})$, where
\begin{equation}\label{alla}
 f_\alpha: I_{k-1} \to \R^n, \ \  f_\alpha  (x_1,...,x_{k-1}) \df f(x_1,...,x_{k-1},\alpha),
\end{equation}
does not belong to any rational affine hyperplane.

\end{lemma}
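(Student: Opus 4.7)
The plan is to observe that the family of rational affine hyperplanes in $\R^n$ is countable, so it suffices to show that for each primitive $\mathbf{m} = (m_0, m_1, \ldots, m_n) \in \Z^{n+1}$ the exceptional slice set
$$E_{\mathbf{m}} \df \{\alpha \in (0,1) : f_\alpha(I_{k-1}) \subset A_{\mathbf{m}}\}$$
is at most countable. Then $\bigcup_{\mathbf{m}} E_{\mathbf{m}}$ is a countable subset of $(0,1)$ and any $\alpha \in (0,1) \smallsetminus \bigcup_{\mathbf{m}} E_{\mathbf{m}}$ is admissible.

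To analyze $E_{\mathbf{m}}$, I would introduce the real analytic function $g_{\mathbf{m}} : I_k \to \R$ given by
$$g_{\mathbf{m}}(x_1, \ldots, x_k) \df \sum_{i=1}^n m_i f_i(x_1, \ldots, x_k) - m_0,$$
where $f = (f_1, \ldots, f_n)$. By construction, $\alpha \in E_{\mathbf{m}}$ exactly when $g_{\mathbf{m}}(x_1, \ldots, x_{k-1}, \alpha) = 0$ for every $(x_1, \ldots, x_{k-1}) \in I_{k-1}$, and the hypothesis $\mathcal{M} \not\subset A_{\mathbf{m}}$ translates into $g_{\mathbf{m}} \not\equiv 0$ on $I_k$.

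The key step is to show that $E_{\mathbf{m}}$ cannot have an accumulation point in $(0,1)$. Suppose for contradiction $\alpha_j \in E_{\mathbf{m}}$ are distinct and converge to some $\alpha_0 \in (0,1)$. For each fixed $(x_1, \ldots, x_{k-1}) \in I_{k-1}$, the one-variable function $\alpha \mapsto g_{\mathbf{m}}(x_1, \ldots, x_{k-1}, \alpha)$ is real analytic on $(0,1)$ and vanishes on the set $\{\alpha_j\}$, which has an accumulation point; by the one-variable identity principle it vanishes identically. Since this is valid for every $(x_1, \ldots, x_{k-1})$, we obtain $g_{\mathbf{m}} \equiv 0$ on $I_k$, contradicting $\mathcal{M} \not\subset A_{\mathbf{m}}$. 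Hence $E_{\mathbf{m}}$ is a discrete (in fact closed discrete) subset of $(0,1)$, and therefore at most countable.

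I do not expect a serious obstacle. The hypothesis $k\ge 3$ is not used inside the lemma itself; it enters only through the induction in Proposition \ref{prop: reducing dimensions}. A small bookkeeping remark that I would include is that once $\alpha$ is chosen outside the bad set, the restricted map $f_\alpha$ inherits from $f$ both injectivity and the rank-$(k-1)$ property of its derivative, since any $k-1$ of the $k$ linearly independent partials $\partial f/\partial x_j$ remain linearly independent; thus $f_\alpha(I_{k-1})$ is genuinely a $(k-1)$-dimensional real analytic submanifold of $\R^n$ not contained in any rational affine hyperplane.
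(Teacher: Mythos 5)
Your proof is correct, but it takes a genuinely different route from the paper. The paper argues by contradiction globally: if every slice $f_\alpha(I_{k-1})$ were contained in some rational affine hyperplane, then $I_k = \bigcup_{\mathbf{m}} f^{-1}\left(A_{\mathbf{m}} \cap \mathcal{M}\right)$ would be a countable union of closed sets, so by the Baire category theorem some $f^{-1}\left(A_{\mathbf{m}_0}\cap\mathcal{M}\right)$ has nonempty interior; then the analyticity-plus-connectedness principle (an analytic intersection with nonempty interior forces containment) yields $\mathcal{M}\subset A_{\mathbf{m}_0}$, a contradiction. You instead fix each hyperplane $A_{\mathbf{m}}$, look at the exceptional parameter set $E_{\mathbf{m}}$, and use the one-variable identity principle on the analytic functions $\alpha\mapsto g_{\mathbf{m}}(x_1,\dots,x_{k-1},\alpha)$ to show $E_{\mathbf{m}}$ has no accumulation point in $(0,1)$, hence is countable; choosing $\alpha$ outside the countable union finishes the argument. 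Both arguments are sound (your reduction of ``$\mathcal{M}\not\subset A_{\mathbf{m}}$'' to ``$g_{\mathbf{m}}\not\equiv 0$'' and the restriction of $g_{\mathbf{m}}$ to vertical lines being real analytic are unproblematic), and your slicing argument actually buys a slightly stronger conclusion: all but countably many $\alpha\in(0,1)$ work, whereas the paper only extracts the existence of one good $\alpha$. The paper's Baire argument is a bit softer and more robust -- it needs only that each bad set is closed, not that it is discrete, and it reuses the same analytic-continuation principle invoked elsewhere in the section -- but for this lemma the two are interchangeable. Your closing remarks are also accurate: $k\ge 3$ plays no role inside the lemma itself (it only reflects where the lemma is used in the induction of Proposition 2.4), and injectivity together with the rank-$(k-1)$ property of $d f_\alpha$ do pass to the slice, so $f_\alpha(I_{k-1})$ is indeed a $(k-1)$-dimensional real analytic submanifold.
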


\begin{proof}[Proof of Lemma \ref{wW} (and hence of Proposition
  \ref{prop: reducing dimensions})]
  If 
  the conclusion of
  the Lemma is not true, then  for any 
$\alpha \in (0,1)$ there exists a  rational affine hyperplane $A$ containing 
the image of the function (\ref{alla}).
This means that
\begin{equation}\label{nill}
\bigcup_{\mathbf{m}} f^{-1}\left (A_{\mathbf{m}}\cap \mathcal{M} \right)
 =I_k
\end{equation}
 where the union is taken over all primitive vectors $\mathbf{m} \in
 \Z^{n+1}$ and $A_{\mathbf{m}}$ is the rational affine hyperplane
 defined via \equ{eq: def hyperplane}. This is a countable union of
 closed subsets of $I_k$ so by the Baire category theorem, one of them
 has nonempty interior. That is there is a nonempty open subset $\mathcal{U}
 \subset I_k$ and $\mathbf{m}_0$ such that $f(\mathcal{U}) \subset
 A_{\mathbf{m}_0}$. That is, $\mathcal{M} \cap
 A_{\mathbf{m}_0}$ has nonempty interior in $\mathcal{M}$. By analyticity
 and connectedness of $\mathcal{M}$ we then have $\mathcal{M} \subset
 A_{\mathbf{m}_0}$, contrary to hypothesis. 
\end{proof} 

\ignore{

Now we  will show how to construct just one vector 
$\pmb{\xi}$ on a real analytic surface $\mathcal{S}$ 
 with  {\color{brown} the} required properties. 
 {\color{brown}
 Let $\varphi$ and $\Phi$ be as in the statement of Theorem 1.1.}
 The construction is rather flexible.  
 {\color{brown}
 We will explain here how to find one vector $ \xi$  on $\mathcal{S}$ with the required properties; to get uncountably many one can repeat the considerations as in the proof of  Theorem ???....}

We proceed by induction to construct a
sequence of points 
\begin{equation}\label{me}
\mathbf{m}_\nu  =(m_{\nu,0},m_{\nu,{\color{brown}1}}\dots ,m_{\nu, n})\in \mathbb{Z}^{n+1},
M_\nu = {\color{brown}\Phi (\underline{\mathbf{m}}_\nu)},\,\,\,\,
M_{\nu+1}> M_\nu,
\,\,\,\,\, \nu\in\N, 
\end{equation}
and a sequence of nested  $2$-disks
\begin{equation}\label{dee}
\frak{D}_\nu = \overline{\mathcal{B}}_{\varepsilon_\nu} ( \pmb{\xi}_\nu )\cap \mathcal{S},\,\,\,\,\,
{\frak{D}}_\nu \supset {\frak{D}}_{\nu +1},\,\,\,\,\,
\nu\in\N 
\end{equation}
with the centers 
$$\pmb{\xi}_\nu \in {\frak{D}}_\nu,\,\,\,\,\, \nu\in\N, $$
satisfying the following properties for every $\nu \in \N$:

\vskip+0.3cm

\noindent
{\bf (a)} \,    
$\pmb{\gamma}_{\mathbf{m}_\nu}={A}_{\mathbf{m}_\nu}\cap  \frak{D}_\nu $
is an analytic curve {\color{brown} which has tangent vector everywhere and  $ \pmb{\xi}_\nu \in \pmb{\gamma}_{\mathbf{m}_\nu}$
is its interior point. Suppose that } the intersection
${A}_{\mathbf{m}_\nu}\cap \new{T_{\pmb{\xi}_\nu}\mathcal S}$ is a one-dimensional affine subspace and moreover
$$
\pmb{\gamma}_{\mathbf{m}_\nu}\cap  {A}_{\mathbf{m}} =\varnothing \,\,\,\,\,
\forall \,\mathbf{m} \in {\color{brown} \mathcal{R}_1 } ( \pmb{\gamma}_{\mathbf{m}_\nu},M_\nu );
$$

\vskip+0.3cm



\vskip+0.3cm

\noindent
{\bf (b)} \, for every $\pmb{x}\in{ \frak{D}}_{\nu}$ 
and for any $ \mathbf{m} = (m_0, m_1,\dots ,m_n)\in \mathbb{Z}^{n+1}$ with 
$\color{brown}{\Phi  (\underline{\mathbf{m}}) < M_{\nu-1}}$  one has
\begin{equation}\label{ppop}
m_1
x_1+ \dots +m_nx_n - m_0 \neq 0,
\end{equation}
that is, 
$A_{\mathbf{m}} $ and $\mathcal{D}_{\nu}$ are disjoint;

\vskip+0.3cm
\noindent
{\bf (c)} \,  for every $\pmb{x}\in{ \frak{D}}_{\nu}$ 
one has
$$
|
m_{1,\nu-1}
x_1+ \dots +m_{n,\nu-1}x_n - m_{0,\nu-1} | < \new{\varphi} (M_{\nu})
,
$$
where  the function  $\new{\varphi (t)}$ is the function  from the statement of  Theorem 1.1. 

\vskip+0.3cm

{\color{brown}
Let $\xi$ belong to the intersection $\bigcap_\nu \mathfrak{D}_\nu$. Then
 $
\pmb{\xi} $
does not belong to any rational affine subspace of $\mathbb{R}^n$ by condition ({\bf b}) and thus is  totally irrational.
Moreover from ({\bf c})   we see that  for the function (\ref{0p0}) we have
$$
\psi_{\Phi, \pmb{\xi}} (M_{\nu-1} )< \varphi (M_\nu).
$$
Now for  $t$ from the interval $ M_{\nu-1} \le t < M_\nu$ 
by the  monotonicity of $\psi_{\Phi, \pmb{\xi}} (t)$ 
we have the inequalities
$$
\psi_{\Phi, \pmb{\xi}} (t)\le  \psi_{\Phi, \pmb{\xi}} (M_{\nu-1} )<
\varphi (M_\nu) \le 
  \varphi (t).
$$
It follows from the condition (\ref{ccoo}) that the set of segments
$$
        [ M_{\nu-1}  ,  M_\nu],\,\,\,\,\, \nu = 1,2,3,...
$$
covers a certain ray $ [t_0, +\infty)$.
So Theorem \ref{manifolds1} follows from the  described inductive construction. }

\vskip+0.3cm
Now we describe how to construct the objects satisfying ({\bf a}),  ({\bf b}), ({\bf c}). We do this by induction in $\nu$.
The base of induction is obvious. We suppose that  the objects
$$
\mathbf{m}_j,\,\,\,\,\,
\frak{D}_j,\,\,\,\,\,
\pmb{\gamma}_{\mathbf{m}_j}
$$
 up to $j \le \nu$  satisfying ({\bf a}),  ({\bf b}), ({\bf c}) 
are constructed. We should construct the objects for $\nu+1$.

By condition ({\bf a}) we can  take a small open neighborhood $\mathcal{B}$ of $\pmb{\gamma}_{\mathbf{m}_\nu}$
such that for any point 
$\pmb{x} \in \mathcal {U}$ one has (\ref{ppop}) for all
$\mathbf{m} \in {\color{brown} \mathcal{R}_1 } ( \pmb{\gamma}_{\mathbf{m}_\nu},M_\nu )$.

 {\color{brown}

The main difficulty now is to avoid all the affine subspaces from the collection 
$\mathbf{m} \in {\color{brown} \mathcal{R}_2 } ( \pmb{\gamma}_{\mathbf{m}_\nu},M_\nu )$.
This will be done in Proposition  3.2 below.

So we take $\pmb{\gamma}= \pmb{\gamma}_{\mathbf{m}_\nu}$    from the condition ({\bf a}).
For this curve  we use Lemma 2.5 with $\pmb{\gamma}= \pmb{\gamma}_{\mathbf{m}_\nu}$
 and construct the vector  $\mathbf{m}_0$.
Then we use Lemma 2.6 and from $\gamma_{\mathbf{m}_\nu}$  and $ \mathbf{m}_0$ we construct  the set $\frak{W} (\pmb{\gamma}_{\mathbf{m}_\nu} ) $ which gives the collection 
of affine subspaces 
 $ 
\{A_{\pmb{m }}:\, \mathbf{m} \in \mathfrak{W}(\pmb{\gamma}_{\mathbf{m}_\nu})\}
$.
We can suppose that $\delta$ from Lemma \ref{L5} is so small that always
$\overline{\mathcal{B}}_\delta (\pmb{x}_{\mathbf{m}})\subset \mathcal{B}$.

}

\begin{proposition}\label{newprop}
    There exists $\mathbf{m}_{\nu+1} \in  \frak{W} (\pmb{\gamma}_{\mathbf{m}_\nu})$ with $  {\color{brown} \Phi (\underline{\mathbf{m}}_{\nu+1})> M_\nu}$
such that {\color{brown}
for the curve $ \pmb{\gamma}_{\mathbf{m}_{\nu+1}} \subset A_{\mathbf{m}_{\nu+1}} \cap\mathcal{S}$ which comes from 
{\rm ({\bf ii})} of Lemma \ref{L5}} 
the intersection
$$
\pmb{\gamma}_{\mathbf{m}_{\nu+1}}\cap {A}_{\mathbf{m}}
$$
is finite for all $\mathbf{m} \in  {\color{brown} \mathcal{R}_2 }  ( \pmb{\gamma}_{\mathbf{m}_\nu},M_\nu )$.
\end{proposition}

\begin{proof} 
The set $ {\color{brown} \mathcal{R}_2 }  ( \pmb{\gamma}_{\mathbf{m}_\nu},M_\nu )$ is finite.
Suppose that there exists 
$\pmb{k} \in {\color{brown} \mathcal{R}_2 } ( \pmb{\gamma}_{\mathbf{m}_\nu},M_\nu )$ and 
infinitely many $\mathbf{m} \in \frak{W} (\pmb{\gamma}_{\mathbf{m}_\nu})$ such that the 
intersection 
\begin{equation}\label{inter}
{A}_{\pmb{k}}\cap \pmb{\gamma}_{\mathbf{m}} 
\end{equation}
 is infinite.
Let us denote {\color{brown} the} set  of all such $\mathbf{m}$ as $ \frak{V}(\pmb{k})$.
 As
 $\pmb{\gamma}_{\mathbf{m}} $ is analytic, by 
Lemma \ref{L1}  we have
 $\pmb{\gamma}_{\mathbf{m}}  \subset {A}_{\pmb{k}}$. We take an arc of this curve which \new{lies} 
     in the small ball $\overline{B}_\delta (\pmb{x}_{\mathbf{m}})$  from condition ({\bf ii}) of Lemma \ref{L5}.
     \comm{(Kolya, will you add an explanation of how to choose $\mathbf{m}_0$?) - added} 
     For this arc we will use the same notation  $\pmb{\gamma}_{\mathbf{m}} $.
 Enumerate all of these curves as $\pmb{\gamma } (j), \, j \in\N  $.
  From  condition  ({\bf iii}) of Lemma \ref{L5}  
  we see that the angles between 
  $T_{\pmb{x}}\pmb{\gamma}(j)$ and $T_{\pmb{x}}\pmb{\gamma}_{{\mathbf{m}}_\nu}$, 
  where $ \pmb{x} = \pmb{\gamma}(j)\cap \pmb{\gamma}_{{\mathbf{m}}_\nu}$,
  are uniformly bounded from below by a positive constant. So
  it follows that every  affine subspace ${A}$ from the  collection 
  $\frak{C} (A_{\mathbf{m}_\nu})$ defined in 
    (\ref{paaraa}) of subspaces parallel to $ {A}_{\mathbf{m}_\nu}$
  and close  enough to   
  ${A}_{\mathbf{m}_\nu}$ intersect  each of the curves $\pmb{\gamma } (j)$.
  Now observe that for every $A$ and distinct $j$ the points of intersection
  $A\cap \pmb{\gamma } (j)$ are distinct. Indeed, this follows from the fact that for $j\neq j'$
  the curves $\pmb{\gamma } (j)$  and  $\pmb{\gamma } (j')$  belong to different parallel subspaces from the collection $\frak{W} (\pmb{\gamma}_{\mathbf{m}_\nu})$.
From Proposition  \ref{moregeneral}  we deduce that for any such  subspace the intersection $
\pmb{\sigma}_{A} \df {A}\cap \mathcal{S} \cap \mathcal{B}$ is an analytic curve. But since 
$\pmb{\sigma}_{{A}}\cap {A}_{\color{brown} k}$
contains infinitely many distinct points
$ \pmb{\sigma}_{{A}} \cap \pmb{\gamma}(j)\in {A}_{\pmb{k}}$,
 by Lemma \ref{L1} we have $\pmb{\sigma}_{{A}}\subset {A}_{\pmb{k}} $
 for every $A \in \frak{C} (A_{\mathbf{m}_\nu})$. 
On the other hand, the union of the curves $
\bigcup_{A \in \frak{C} (A_{\mathbf{m}_\nu})}
\pmb{\sigma}_{{A}}$ covers a certain open set in $\mathcal{S}$.
 By analyticity this implies $ \mathcal{S} \subset {A}_{\pmb{k}}$. But $ {A}_{\pmb{k}}$ is a rational subspace and we got a contradiction with the conditions of Theorem 1.1.
Thus for any $\pmb{k} \in  {\color{brown} \mathcal{R}_2 }  (
\pmb{\gamma}_{\mathbf{m}_\nu},M_\nu )$ the set $\frak{V}(\pmb{k})$ is
finite. 
{\color{brown}
By (\ref{ccoo}) we can take 
$\mathbf{m}_{\nu+1} \in  \frak{W} (\pmb{\gamma}_{\mathbf{m}_\nu})$ with $
\Phi (\underline{\mathbf{m}}_{\nu+1})$ arbitrary large, 
}
and the proposition follows.\end{proof}

Now we are able to finish the inductive step. We take $\mathbf{m}_{\nu+1}
$ from the above proposition as the next point  from (\ref{me}) and
define the analytic curve  
$\pmb{\gamma}' \subset {A}_{\mathbf{m}_{\nu+1}}\cap \mathcal{S}$
 with interior point $\pmb{x}_{\mathbf{m}_{\nu+1} } =  \pmb{\gamma}_{\mathbf{m}_\nu}\cap  \pmb{\gamma}' $.
 By a finiteness argument there is an arc of $\pmb{\gamma}' $ with  an
 endpoint $\pmb{x}_{\mathbf{m}_{\nu+1} } \in  \pmb{\gamma}_{\mathbf{m}_\nu}$
 which does not intersect any of the subspaces ${A}_{\mathbf{m}}$ with  
 $\color{brown}{\Phi  (\underline{\mathbf{m}}) < M_{\nu}}$.  We take $ \delta_{\nu+1}>0$ small enough such that for all $\pmb{x} \in \mathcal{B}_{\delta_{\nu+1}} (\pmb{x}_{\mathbf{m}_{\nu+1} } )$ we have
\begin{equation}\label{aua}
|
m_{1,\nu}
x_1+ \dots +m_{n,\nu}x_n - m_{0,\nu} | < \varphi (M_{\nu+1})
.
\end{equation} 
As ({\bf a}) is satisfied, we may assume  that
\begin{equation}\label{abbb}
\mathcal{B}_{\delta_{\nu+1}} (\pmb{x}_{\mathbf{m}_{\nu+1} } ) \cap {A}_{\mathbf{m}}=\varnothing\,\,\,\,\,\,
\forall\,  {A}_{\mathbf{m}} \in {\color{brown} \mathcal{R}_1 }  ( \pmb{\gamma}_{\mathbf{m}_\nu},M_\nu ).
\end{equation}
Now we take a closed  arc $\pmb{\gamma}'' \subset \pmb{\gamma}' \cap \mathcal{B}_{\delta_{\nu+1}} (\pmb{x}_{\mathbf{m}_{\nu+1} } )$
and its neighborhood 
$  \mathcal{B}'' \supset  \pmb{\gamma}''
 $ 
  such that 
  \begin{equation}\label{abbb1} 
\mathcal{B}'' \cap {A}_{\mathbf{m}}=\varnothing 
\end{equation}
 for every $\mathbf{m}\in {\color{brown} \mathcal{R}_2 }  ( \pmb{\gamma}_{\mathbf{m}_\nu},M_\nu )$.
 We may suppose that $ \mathcal{B}'' \subset \mathcal{B}_{\delta_{\nu+1}} (\pmb{x}_{\mathbf{m}_{\nu+1} } )$.
 So (\ref{abbb}) and (\ref{abbb1}) gives (\ref{ppop}) for all ${A}_{\mathbf{m}}$  with $\color{brown}{\Phi  (\underline{\mathbf{m}}) < M_{\nu}}$
 and for all $\pmb{x} \in \mathcal{B}''$. This will mean that condition ({\bf b}) is satisfied for the next inductive step $\nu+1$.
Then we apply Lemma \ref{L2} to the analytic curve $\pmb{\gamma}''\cap \mathcal{B}''$.
By Lemma \ref{L2} we obtain  the curve $ \pmb{\gamma}_{\mathbf{m}_{\nu+1}}= \pmb{\gamma}_1$
which satisfies  condition ({\bf a}) for the $(\nu+1)$-th inductive step.
 Now we take $\pmb{\xi}_{\nu+1}
\in  \pmb{\gamma}_{\mathbf{m}_{\nu+1}}$  and $\varepsilon_{\nu+1}$ so
small  that $  \frak{D}_{\nu+1} \subset  \mathcal{B}''$. }



\section{Proof of Theorem \ref{manifolds1}}\label{pf}
We first explain informally the main difficulty in the proof and the idea that
allows us to overcome it. As 
was mentioned above, the intersections of real analytic submanifolds 
with
affine hyperplanes 
are semianalytic sets,  but they need not themselves be real
analytic submanifolds. This makes it tricky to verify the hypotheses
of Theorem \ref{thm: abstract}. To deal with this, we first pass to
the case in which $\mathcal{S}$ is a surface and is not contained
in a proper rational affine hyperplane. This means that the
intersections $\mathcal{S} \cap A$ can be described by Proposition \ref{prop: two
dimensions simpler}. Moreover for some affine hyperplanes $A$, the
sets $\mathcal{S} \cap A$ can be taken to satisfy a
transversality condition which implies that they {\it are}  real analytic
curves. Specifically in the proof below, the $L_i$ will be closed
real analytic curves, while the $L'_j$ will be basic one-dimensional
components of one-dimensional semianalytic sets. We now proceed to the details of the
argument. 

\begin{proof}[Proof of Theorem \ref{manifolds1}] By Proposition \ref{prop: reducing dimensions} we can assume that $\mathcal{S}$ is bounded,
connected  and two-dimensional. 
Let $\{A_i\}$ be the collection as in the proof of Theorem \ref{thm: fractals},
that is the $A_i$ are the affine rational hyperplanes normal to
one of the two standard basis vectors $\mathbf{e}_1, \mathbf{e}_2$.
For each $\pmb{\xi} \in \mathcal{S}$, the
tangent space $T_{\pmb{\xi}} \mathcal{S}$ is a two dimensional affine subspace of $\R^n$
passing through ${\pmb{\xi}}$. {Recall that $ \mathcal{S}$ is not contained in any proper rational affine
subspace of $\R^n$}; thus, by possibly replacing $\mathcal{S}$
with its smaller connected open subset, we can assume that for every ${\pmb{\xi}} \in
\mathcal{S}$, the tangent space $T_{\pmb{\xi}} \mathcal{S} $ is not normal to
either of $\mathbf{e}_1, \mathbf{e}_2$. This implies that we can view
$\mathcal{S}$ as a graph of a smooth function over its projection to the
two-dimensional 
space $V_{12} \df \spa (\mathbf{e}_1,
\mathbf{e}_2) \cong \R^2$. This implies furthermore that for each $i$
and each ${\pmb{\xi}}
\in \mathcal{S} \cap A_i$, the intersection $T_{\pmb{\xi}} \mathcal{S} \cap A_i$
is a transversal intersection, that is, an affine subspace of dimension
one. By taking $\mathcal{S}$ smaller, we can ensure that its projection to the plane $V_{12}$ is an open bounded convex set. Now define $L_i \df S \cap A_i$ (where we only take those indices $i$ for which $L_i$ is not empty). Each $L_i$ is closed as a subset of $\mathcal{S}$, and by the implicit function theorem it is a real analytic curve. Since the projection of $\mathcal{S}$ on $V_{12}$ is convex, each $L_i$ is also connected.

Having defined the collections $\{L_i \}, \{A_i\}$ we now define the
collection $\{L'_j\}$. For any rational affine hyperplane $A$ for
which $\mathcal{S} \cap A$ is nonempty, we have by Proposition
\ref{prop: two dimensions simpler} 
its one-dimensional basic components. There are at most countably many
such sets $\{\gamma_j\}$ where each $\gamma_j$ is a real analytic
curve whose closure $\overline{\gamma_j} $ satisfies that 
$\overline{\gamma_j} \smallsetminus \gamma_j$ consists of at most two
points. We take
$$\{L'_j\} \df \{\overline{\gamma_j} : \forall\, i, \, \gamma_j \not
\subset L_i\}. $$
We claim that with these choices, conditions (a)---(d) of Theorem
\ref{thm: abstract} are satisfied (note that as a real analytic
submanifold of $\R^n$, $\mathcal{S}$ is locally closed). 

Properties (a), (b), (d) are straightforward. 
Indeed, since each $L_i$ is a connected real analytic curve, the condition
$\gamma_j \not \subset L_i$ is equivalent to $\overline{\gamma_j} \not
\subset L_i$. Also the sets $\overline{\gamma_j}$ contain all points
of $\mathcal{S}$ which belong to rational affine hyperplanes but not
to one of the hyperplanes $A_i$. Thus we have (a). For (d) note we can apply the
projection to the plane $V_{12}$, since $\mathcal{S}$ is a graph over this plane. By
construction, the
projections of the $L_i$ form a dense collection of horizontal lines and a dense
collection of vertical lines. In particular (d) holds. 
For (b) we 
continue to work in the plane $V_{12} $. For every point ${\pmb{\xi}}$ on (say)
a horizontal 
line $\ell \subset V_{12}$, which is the projection of some $L_i$, there is a
sequence of intersection points ${\pmb{\xi}}_j$ of 
$\ell$ with vertical lines such that ${\pmb{\xi}}_j \to {\pmb{\xi}}$ and ${\pmb{\xi}}_j$ is
contained in spaces $A_j$. A computation similar to the one used in
the proof of Theorem \ref{thm: fractals} shows that along this
sequence, we have $|A_j| \to \infty,$ and (b) follows. 

For (c) we argue as follows. Let $F, F'$ be as in statement (c). The
set $L_i$ is a real analytic curve and for
each $k \in F$, $L_i \cap L_k$ is either empty or consists of a single
point. Now let $k' \in F'$, and suppose by contradiction that $L'_{k'}
\cap L_i$ has nonempty interior, relative to the topology on
$L_i$. Then, since $L'_{k'}$ is the closure of a real analytic curve
$\gamma$ with $\overline{\gamma} \smallsetminus \gamma$ consisting of
at most two points, $L_i \cap \gamma$ also has nonempty
interior relative to $L_i$, and, since the dimensions are both equal to
one, $\gamma \cap L_i$ also has nonempty interior relative to
the topology of $\gamma$. Since $L_i$ is closed and $\gamma$ is
connected, this means that $\gamma \subset L_i$, contradicting the
definition of $L'_{k'}$. 
\end{proof}

We close the section by commenting on Theorem \ref{C} and its proof given in \cite{KW},
which, as was mentioned in the introduction, 
contained an error. Since in this 
paper we prove a strengthening, namely Theorem~\ref{manifolds1}, we do not rewrite
the proof of \cite{KW} completely. Rather we explain the gap in the proof
and sketch how it can be fixed.

Theorem \ref{C} is derived in \cite{KW} from an abstract result
\cite[Theorem 5.1]{KW}, 
which is similar to Theorem \ref{thm: abstract} (abstracting
Khintchine's classical argument). The statement of 
\cite[Theorem 5.1]{KW}
involves two countable lists $X_1, X_2, \ldots$ 
and  $X'_1, X'_2, \ldots$ of closed subsets of a subset $X$ of a Lie
group. The application deals with a real analytic submanifold
$\mathcal{S} \subset \R^n$ of 
dimension at least two, embedded in the group $\SL_{n+1}(\R)$. In
order to conclude that $\mathcal{S}$ 
contains totally irrational singular vectors $\pmb{\xi}$, and to obtain a
bound on their associated parameter $\hat{\omega}(\pmb{\xi}) $, some
conditions on the sets $X_i, X'_j$ must be checked.
One of these is the following transversality condition:
\eq{eq: condition to be checked}{
  \text{ for every } i, j,\  X_i = \overline{X_i \smallsetminus X'_j}
}(which is analogous to hypothesis (c) of Theorem \ref{thm: abstract}). 
  The argument given in \cite{KW} defines the $X_i, X'_j$ as connected
  components of the 
  intersection of $\mathcal{S}$ with rational affine hyperplanes.
  It is then erroneously claimed 
  that \equ{eq: condition to be checked} holds for these
  choices. Indeed, with the notations of Example \ref{example:
    erroneously}, set $A_1 \df A$ and 
  $
  A_1' \df
  \left\{\left(0 , y,z \right ) : y,z \in \R \right\}.$
 Then $X_1 = \mathcal{S} \cap A_1$ is the union of two lines intersecting at
  a point, and
  $X'_1 = \mathcal{S} \cap A_1'$ is one of these lines.  
  So \equ{eq: condition to be checked} fails.
  
  It is possible to rectify the proof by adapting some of the
  arguments we used in the proof of Theorem \ref{manifolds1}; namely,
  by replacing $\mathcal{S}$ with a two-dimensional real analytic submanifold,
  and adjusting the definitions of the sets $X_i, X'_i$ using the
  notion of basic components. We leave the details to the reader.

\ignore{  
  To rectify the proof one can argue as follows.
  Using Proposition \ref{prop: reducing dimensions}, one
  can assume that $\mathcal{S}$ is two-dimensional.
  Using the definition of one dimensional basic components furnished
  by Proposition \ref{prop: two dimensions simpler},
  for each affine hyperplane
  $A$,  the
  intersection $S \cap A$ is a 
  finite disjoint union of one-dimensional basic components and a
  finite set of points, where the closures of the one-dimensional
  basic components can only intersect in this finite set. 
  From this description it 
  follows that if we define $X_1, \ldots$ and $X'_1, \ldots $ using
  the same affine hyperplanes as in \cite{KW}, but replacing the words `connected
  components of' with `closures of the one-dimensional basic
  components of', then condition 
  \equ{eq: condition to be checked}
  holds.
  The rest of the proof of Theorem C given in \cite{KW} requires no modifications.
}

\section{Badly approximable subspaces}\label{BAD}
In this section we  give upper bounds for the exponent
$\hat{\omega}(\pmb{\xi}) $ for points $ \pmb{\xi} \in A$  in  
case when an $s$-dimensional affine subspace $A$  of $\mathbb{R}^n$   is badly approximable.
 To define the latter property, we identify $\mathbb{R}^n$ with the affine subspace
$$
\mathbb{R}^n_1 :=\big\{ \mathbf{x} = (x_0,x_1,\dots,x_n) \in \mathbb{R}^{n+1}: \, x_0 = 1\big\}
$$
and consider the affine subspace
$$
\mathcal{A} \df \{ \mathbf{x} 
 \in \mathbb{R}^{n+1}: \,
x_0 = 1\, , 
(x_1,...,x_n) \in A
\}.
$$
Let us define the linear subspace
$$
L_A \df  {\rm span}\, \mathcal{A} \subset \mathbb{R}^{n+1}.
$$
It is clear that
 $
L_A $
has dimension 
$s+1$.
From Minkowski's convex body theorem it follows that there exists a constant $C$ dependent only on $n$  such that for any $A$ there exists infinitely many 
integer vectors $\mathbf{m}\in \mathbb{Z}^{n+1}$
such that 
$$
{\rm dist}\, (
L_A,\mathbf{m}
)^{n-s} \cdot \|\mathbf{m}\|^{{s+1}}<C.
$$
We define $A$ to be a {\it badly approximable} subspace if  
$$
\inf_{\mathbf{m}\in \mathbb{Z}^{n+1} \nz
}
\,\,\,
 {\rm dist}\, \left(
L_A, \mathbf{m}
\right) ^{n-s} \cdot \|\mathbf{m}\|^{{s+1}}
>0.
$$
It is clear that badly approximable subspaces exist. Moreover
from a famous theorem of Schmidt \cite{SL}
 it follows that they form a {\it thick} set (that is, the set of badly approximable subspaces in any non-empty open subset of the Grassmanian of  all  $s$-dimensional affine subspaces of $\R^n$   has full \hd). 
{Indeed, without loss of generality one can parametrize $A$ in the following form:
\begin{equation}\label{param}
A = \left\{\pmb{\xi} = \begin{pmatrix}\pmb{x}\\ \pmb{y}_0 + Y\pmb{x}   \end{pmatrix}: \pmb{x} \in\R^s\right\},
\end{equation}
where $Y\in M_{n-s,s}$ and $\pmb{y}_0\in\R^{n-s}$. 
Define 
$$
w_{s,n} \df \frac{s+1}{n-s}.
$$
Then it is easy to see that $A$ is badly approximable if and only if the augmented matrix \begin{equation}\label{aug}\widetilde Y := \left[\begin{matrix} \pmb{y}_0 & Y\end{matrix}\right]\in M_{n-s,s+1}\end{equation} is badly approximable, that is, if $$\inf_{\mathbf{q}\in\Z^{s+1}\nz}\|\mathbf{q}\|^{w_{s,n}} \|\langle \widetilde Y \mathbf{q}\rangle\| > 0.$$
Note} that
\eq{lessthan}{w_{s,n}<1\quad\Longleftrightarrow\quad s < \frac{n-1}{2}.}
Let
$W_{s,n}$ be the unique root of the equation
\begin{equation}\label{0W}
 x^{n+1}-w_{s,n}^{n-1}(1+
w_{s,n})x + w_{s,n}^n=0
\end{equation}
in the interval $(0,w_{s,n})$.

\begin{proposition}\label{prop: 4.1}
Let $A$ be an $s$-dimensional \ba\ affine subspace   of $\mathbb{R}^n$. Then:

\begin{itemize}
\item[\rm (i)]
for any $\pmb{\xi} \in A$ one has
\begin{equation}\label{BAD1}
\hat{\omega}(\pmb{\xi}) \le w_{s,n};
\end{equation}

\item[\rm (ii)]
for any  totally irrational $\pmb{\xi} \in A$  
one has
\begin{equation}\label{BAD2}
\hat{\omega}(\pmb{\xi}) \le W_{s,n}.
\end{equation}
\end{itemize}
\end{proposition}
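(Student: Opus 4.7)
My plan is to establish the two parts separately: part (i) by direct unpacking of the badly approximable hypothesis, and part (ii) by a Hadamard-style determinant argument involving $n+1$ best approximations, making crucial use of the total irrationality assumption.

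For part (i), I observe that $\widetilde{\pmb{\xi}} \df (1,\pmb{\xi})$ lies in $L_A$ since $\pmb{\xi}\in A$. Given any approximation $q\in\N$ with $q\le t$ and $\pmb{p}\in\Z^n$ with $\|\dz{q\pmb{\xi}}\|\le t^{-\gamma}$, the integer vector $\mathbf{m}\df (q,\pmb{p})\in\Z^{n+1}$ satisfies
$$\dist(L_A,\mathbf{m})\le \|q\widetilde{\pmb{\xi}}-\mathbf{m}\| = \|\dz{q\pmb{\xi}}\|\le t^{-\gamma},$$
while $\|\mathbf{m}\|\le q(1+\|\pmb{\xi}\|)+1\ll t$. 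The bad approximability of $A$ then forces $t^{-\gamma}\ge c\|\mathbf{m}\|^{-w_{s,n}}\ge c' t^{-w_{s,n}}$. Taking $t\to\infty$ yields $\gamma\le w_{s,n}$, hence $\hat{\omega}(\pmb{\xi})\le w_{s,n}$.

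For part (ii), I suppose by contradiction that $\hat{\omega}(\pmb{\xi})>\gamma$ for some $\gamma>W_{s,n}$, and consider the sequence of best approximations $(q_k,\pmb{p}_k)$ with $\epsilon_k\df \|\dz{q_k\pmb{\xi}}\|$. Applying part (i) to each $(q_k,\pmb{p}_k)$ gives $\epsilon_k\ge c q_k^{-w_{s,n}}$, while $\hat\omega(\pmb{\xi})>\gamma$ combined with the best-approximation property yields $\epsilon_k\le q_{k+1}^{-\gamma}$, leading to the growth bound $q_{k+1}\le C q_k^{\alpha}$ with $\alpha \df w_{s,n}/\gamma$. Total irrationality is invoked as follows: if infinitely many $(q_k,\pmb{p}_k)$ lay in a proper (hence rational) subspace $V\subset\R^{n+1}$ with integer normal vector $\mathbf{a}$, then $\mathbf{a}\cdot(q_k,\pmb{p}_k)=0$ for those $k$, and, combined with $q_k\to\infty$ and $\epsilon_k\to 0$, this forces $\mathbf{a}\cdot\widetilde{\pmb{\xi}}=0$, contradicting total irrationality. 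Using this together with the growth bound (to ensure that sufficiently separated blocks of best approximations are eventually independent), one selects arbitrarily large $k_n$ and indices $k_0<k_1<\cdots<k_n$ with uniformly bounded gaps such that the vectors $(q_{k_j},\pmb{p}_{k_j})$, $j=0,\ldots,n$, are linearly independent. The integer matrix $M$ with these rows then has $|\det M|\ge 1$. Rewriting each row as $q_{k_j}\widetilde{\pmb{\xi}}-(0,\pmb{v}_{k_j})$ where $\pmb{v}_{k_j}\df q_{k_j}\pmb{\xi}-\pmb{p}_{k_j}$ has norm $\epsilon_{k_j}$, multilinear expansion gives a sum in which all terms with two or more copies of the parallel vector $q_{k_j}\widetilde{\pmb{\xi}}$ vanish, and the term with none also vanishes (zero first column), leaving $n+1$ terms controlled via Hadamard by
$$|\det M| \ \ll\ \sum_{i=0}^n q_{k_i}\prod_{j\ne i}\epsilon_{k_j}.$$
Substituting $\epsilon_{k_j}\le q_{k_{j+1}}^{-\gamma}$ into the dominant term $q_{k_n}\prod_{j<n}\epsilon_{k_j}$, and iterating the growth bound in the reverse direction to get $q_{k_j}\gtrsim q_{k_n}^{\alpha^{-(n-j)}}$, one arrives as $q_{k_n}\to\infty$ at the asymptotic inequality $\gamma\sum_{k=0}^{n-1}\alpha^{-k}\le 1$. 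A short algebraic manipulation rewrites this as $p(\gamma)\ge 0$, where $p(x) \df x^{n+1}-w_{s,n}^{n-1}(1+w_{s,n})x+w_{s,n}^n$ is the polynomial in \eqref{0W}. Since $p(w_{s,n})=0$, $p(0)>0$ and $p$ changes sign exactly once in $(0,w_{s,n})$ at $W_{s,n}$ (with $p>0$ on $[0,W_{s,n})$ and $p<0$ on $(W_{s,n},w_{s,n})$), the inequality $p(\gamma)\ge 0$ contradicts $\gamma>W_{s,n}$.

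The main obstacle I anticipate is the selection of linearly independent $(n+1)$-tuples of best approximations whose indices $k_0,\ldots,k_n$ all grow together with $k_n$: a fixed $(n+1)$-tuple would deliver only the vacuous bound $1\le C q_{k_n}$. Overcoming this requires exploiting the super-geometric growth $q_{k+1}\le Cq_k^{\alpha}$ (with $\alpha>1$ in the regime $\gamma<w_{s,n}$ relevant after part (i)) to guarantee that consecutive-type windows are eventually linearly independent, and then chaining the growth bound to estimate each $q_{k_j}$ in terms of $q_{k_n}$. The remaining ingredients (the BA lower bound on $\epsilon_k$, the Hadamard determinant estimate, and the polynomial manipulation) are standard; the diophantine bookkeeping around linear independence is where I expect the bulk of the technical work to reside.
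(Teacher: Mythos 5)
Your part (i) is essentially the paper's argument: both proofs unpack the bad-approximability condition to get the pointwise lower bound $\|\dz{q\pmb{\xi}}\|\gg q^{-w_{s,n}}$ for $\pmb{\xi}\in A$. The paper packages this in Lemma~\ref{BAelementary} using the parameterization \eqref{param} and the augmented matrix \eqref{aug}; you argue directly via $\dist(L_A,\mathbf{m})\le\|q\widetilde{\pmb{\xi}}-\mathbf{m}\|$, which is equivalent. Your algebraic endgame in part (ii) — deriving $p(\gamma)\ge 0$ from $\gamma\sum_{k=0}^{n-1}\alpha^{-k}\le 1$ with $\alpha=w_{s,n}/\gamma$, and locating the sign change of $p$ at $W_{s,n}$ — is also correct and matches what the paper obtains.

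The substantive divergence is in how you reach that inequality. The paper never touches best approximations: it observes $\omega(\pmb{\xi})\le w_{s,n}$ and then invokes, as a black box, the transference inequality $\omega(\pmb{\xi})/\hat\omega(\pmb{\xi})\ge G_n$ of Marnat--Moshchevitin \cite{MaMo} (with a short proof in \cite{Ng}), after which the deduction of \eqref{BAD2} is pure algebra. You instead try to re-derive this transference inequality from scratch by a Hadamard determinant argument on $(n+1)$-tuples of best approximations.

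Here is the gap. Your argument hinges on selecting, for arbitrarily large $k_n$, indices $k_0<\cdots<k_n$ \emph{with uniformly bounded gaps} such that the best approximation vectors $(q_{k_j},\pmb{p}_{k_j})$ are linearly independent. This cannot be obtained from the ingredients you list. Total irrationality only tells you that the best approximations are not \emph{eventually} trapped in a proper rational subspace; it does not prevent them from lying in a fixed $2$-plane (or $d$-plane, $d<n+1$) over arbitrarily long stretches of indices, and the super-geometric growth $q_{k+1}\le Cq_k^{\alpha}$ does nothing to rule this out — growth rate and linear independence are orthogonal phenomena. (Already for $n=2$ one can build totally irrational $\pmb{\xi}$ whose best approximations stay in a fixed rational plane for stretches of unbounded length.) Without bounded gaps you also lose the chain $\epsilon_{k_j}\le q_{k_{j+1}}^{-\gamma}$ in the form you use it, since that step implicitly requires $k_{j+1}=k_j+1$ (the general bound $\epsilon_{k_j}\le q_{k_j+1}^{-\gamma}$ only gets weaker, not stronger, when you replace $q_{k_j+1}$ by the larger $q_{k_{j+1}}$). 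Controlling these losses across unbounded gaps is exactly the nontrivial content of the Marnat--Moshchevitin theorem, and the determinant bookkeeping you describe is not sufficient to recover it. So for part (ii) you have correctly identified the target inequality, but the route you propose to it has a hole at its central step; the paper sidesteps the difficulty entirely by citing \cite{MaMo}, which is the efficient move here.
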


\begin{remark}
In view of \equ{lessthan}, when $\pmb{\xi}$ is totally irrational, the estimate in (i) is non-trivial only if $s < \frac{n-1}{2}$.
If $s$ is fixed, then $w_{s,n} = O(\frac{1}{n})$ as $n\to \infty$, which
shows that for large $n$
the conclusion of Corollary \ref{manifolds3}, that is, the  existence of uncountably many  totally irrational  $\pmb{\xi}  \in {S}$ with
 $\hat{\omega}(\pmb{\xi})\ge \frac{1}{n-1}$, is close to
optimal, in some sense. 
Statement  (ii) gives a slight improvement
of this bound. 

In particular, we can consider the following examples: 

\begin{itemize}
\item[1)]  if
$n=4$ and $s=1$, the inequality  (\ref{BAD1}) gives $\hat{\omega} \le w_{1,4} = \frac{2}{3}$;

\item[2)]   if $ n=2$ and $s=1$, we have $ w_{1,2} = 2$. Equation 
(\ref{0W}) is now 
$x^3-6x+4 =0$,  and the  inequality  (\ref{BAD2}) gives
$$
\hat{\omega} \le 
W_{1,2} = \sqrt{3} -1 = 0.732... 
$$

\item[3)]   if  $ n=3$ and $s=1$, we have $ w_{1,3} = 1$,  equation 
(\ref{0W}) has the form 
 $x^4-2x+1 =0$, and the  inequality  (\ref{BAD2}) gives
$$
\hat{\omega} \le W_{1,3} = 0.54... 
$$

\item[4)]     if $ n=3$ and $s=2$, we have $w_{2,3} = 3$, equation 
(\ref{0W}) is now 
 $x^3-36x+27 =0$,  and the  inequality  (\ref{BAD2}) gives
$$
\hat{\omega} \le W_{2,3} = 0.759... 
$$
\end{itemize}

\end{remark}

\begin{proof}[Proof of Proposition \ref{prop: 4.1}]
{To prove (i), 
 we will use the following elementary
 \begin{lemma}\label{BAelementary} 
If
$\pmb{\xi} \in A$
and $A$ is badly approximable, then  there exists a positive $c$ such
that for every $q\in \N$ we have 
\begin{equation}\label{BADA}
\|\langle q\pmb{\xi}\rangle \| \ge  
c q^{-w_{n,s}}.
\end{equation}
\end{lemma}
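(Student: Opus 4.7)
The plan is to deduce the bound from the badly approximable property of the augmented matrix $\widetilde Y$ of \eqref{aug}, which is equivalent to bad approximability of $A$ as stated in the paragraph preceding the lemma: this gives a constant $c_0>0$ such that $\|\langle \widetilde Y\mathbf{q}\rangle\|\geq c_0\|\mathbf{q}\|^{-w_{s,n}}$ for every $\mathbf{q}\in\Z^{s+1}\nz$. Using the parametrization \eqref{param} I would write $\pmb{\xi}=(\pmb{x},\pmb{y}_0+Y\pmb{x})$ with $\pmb{x}\in\R^s$; the task then reduces to producing a suitable nonzero integer vector in $\Z^{s+1}$ to plug into this inequality.

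Given $q\in\N$, set $\varepsilon\df\|\langle q\pmb{\xi}\rangle\|$ and choose $\pmb{p}_1\in\Z^s$, $\pmb{p}_2\in\Z^{n-s}$ realizing the componentwise distance of $q\pmb{x}$ and $q\pmb{y}_0+qY\pmb{x}$ to the nearest integer, so that both $\|q\pmb{x}-\pmb{p}_1\|$ and $\|q\pmb{y}_0+qY\pmb{x}-\pmb{p}_2\|$ are at most $\varepsilon$. The vector $\mathbf{q}'\df(q,\pmb{p}_1)\in\Z^{s+1}$ is then nonzero (since $q\geq 1$) and satisfies the algebraic identity
$$
\widetilde Y\mathbf{q}'-\pmb{p}_2=q\pmb{y}_0+Y\pmb{p}_1-\pmb{p}_2=(q\pmb{y}_0+qY\pmb{x}-\pmb{p}_2)-Y(q\pmb{x}-\pmb{p}_1),
$$
so $\|\langle\widetilde Y\mathbf{q}'\rangle\|\leq(1+\|Y\|)\varepsilon$, where $\|Y\|$ denotes the operator norm of $Y$ for the sup norm.

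Since the conclusion is trivial when $\varepsilon$ is bounded below, I may assume $\varepsilon\leq 1$, and then $\|\mathbf{q}'\|\leq\max(q,\|\pmb{p}_1\|)\leq q(1+\|\pmb{x}\|)+1\leq C_{\pmb{\xi}}\,q$ for a constant $C_{\pmb{\xi}}$ depending only on $\pmb{\xi}$. Plugging $\mathbf{q}'$ into the bad approximability inequality,
$$
c_0\leq\|\mathbf{q}'\|^{w_{s,n}}\|\langle\widetilde Y\mathbf{q}'\rangle\|\leq(C_{\pmb{\xi}}q)^{w_{s,n}}(1+\|Y\|)\varepsilon,
$$
which rearranges to $\varepsilon\geq c\,q^{-w_{s,n}}$ with $c\df c_0/\big(C_{\pmb{\xi}}^{w_{s,n}}(1+\|Y\|)\big)$. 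The only point that deserves a moment of reflection is the equivalence of bad approximability of $A$ and of $\widetilde Y$ asserted just before the lemma; this follows by identifying (up to constants) the distance from an integer point $(m_0,\pmb{q}_1,\pmb{q}_2)$ to $L_A$ with $\|\pmb{q}_2-\widetilde Y(m_0,\pmb{q}_1)^\top\|$ and then minimizing over $\pmb{q}_2$ by taking it to be the nearest integer point.
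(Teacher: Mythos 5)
Your proof is correct and follows essentially the same route as the paper: both pass to the augmented matrix $\widetilde Y$ of \eqref{aug}, form the integer vector $\mathbf{q}=(q,\pmb{p})\in\Z^{s+1}$ with the nearest-integer point $\pmb{p}$ to $q\pmb{x}$, use the identity $\widetilde Y\mathbf{q}-\pmb{r}+Y(q\pmb{x}-\pmb{p})=q\pmb{y}_0+Y(q\pmb{x})-\pmb{r}$ together with the bound $\|\mathbf{q}\|\le Cq$, and invoke bad approximability of $\widetilde Y$. The only difference is presentational: the paper argues by contradiction (assuming \eqref{BADA} fails for every $c$), whereas you run the same computation directly, which is equally valid.
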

From this lemma 
\eqref{BAD1} follows immediately.
 \begin{proof}[Proof of Lemma \ref{BAelementary}]
 We will use the parameterization \eqref{param}. Assume the contrary, that is, for some $\pmb{\xi} = \begin{pmatrix}\pmb{x}\\ \pmb{y}_0 + Y\pmb{x}   \end{pmatrix}$ and any $\varepsilon > 0$ there exists $\pmb{m} = \begin{pmatrix}\pmb{p}\\ \pmb{r}   \end{pmatrix}\in\Z^n$ such that 
 $$
 \|q\pmb{\xi} - \pmb{m}\| = \left\|\begin{pmatrix}q\pmb{x}\\ q\pmb{y}_0 + Y(q\pmb{x})   \end{pmatrix} - \begin{pmatrix}\pmb{p}\\ \pmb{r}   \end{pmatrix}\right\| < \varepsilon q^{-w_{n,s}}.$$ 
 In particular, $\|q\pmb{x} - \pmb{p}\| < \varepsilon q^{-w_{n,s}};$ thus, if we define $\mathbf{q} := \begin{pmatrix}q\\ \pmb{p}  \end{pmatrix}$, it follows that 
 \begin{equation}\label{normbound}\|\mathbf{q}\|\le C q\quad\text{for some $C = C(\pmb{x})$ independent on $q$, $\pmb{p}$ and }\varepsilon.\end{equation}
{Note that
 $$
 q\pmb{y}_0 + Y(q\pmb{x}) - \pmb{r} =  q\pmb{y}_0 + Y(\pmb{p} + q\pmb{x} - \pmb{p}) - \pmb{r} = \widetilde Y\mathbf{q} - \pmb{r}  + Y (q\pmb{x} - \pmb{p}),
 $$
 where $\widetilde Y$ is as in \eqref{aug}. Hence
 $$
 \|\langle \widetilde Y \mathbf{q}\rangle\| \le  \|q\pmb{y}_0 + Y(q\pmb{x}) - \pmb{r} \| + \|Y (q\pmb{x} - \pmb{p})\| < \widetilde C \varepsilon q^{-w_{n,s}},
 $$
 where $\widetilde C$ is a constant depending only on $Y$. Since $\varepsilon$ was arbitrary and in view of \eqref{normbound}, this shows that $\widetilde Y$, and hence the subspace $A$, is not badly approximable.}\end{proof}}

\ignore{  Suppose that  is very easy. We observe that 
if
$\pmb{\xi} =(\xi_1,...,\xi_n)\in A$
and $A$ is badly approximable, then \comm{\it (DK: sorry, maybe I am too slow, but I do not understand this implication. Can one of you perhaps prove this observation, relating $\|\langle q\pmb{\xi}\rangle \|$ to ${\rm dist}\, (
L_A,\mathbf{m}
)$?)} there exists a positive $c$ such
that for every $q\in \mathbb{Z}_+$ we have 
\begin{equation}\label{BADA}
\|\langle q\pmb{\xi}\rangle \| \ge  
c q^{-\frac{s+1}{n-s}},
\end{equation}
and so 
(\ref{BAD1}) follows.}
\medskip

 To prove (ii)
 we consider the ordinary Diophantine exponent
 $\omega = \omega(\pmb{\xi})$, defined as supremum of those $\gamma>0$ for which the inequality
 $$
||\langle q\pmb{\xi}\rangle ||< q^{-\gamma}
 $$
 has infinitely many solutions in $ q\in \N$.
 It is clear from   \eqref{BADA} that for $\pmb{\xi }\in A$ one has 
 \begin{equation}\label{BADB}
 \omega(\pmb{\xi})\le  \frac{s+1}{n-s} = w_{s,n}.
\end{equation}
 
 Then we apply the inequality
 \begin{equation}\label{pp}
  \frac{\omega(\pmb{\xi})}{\hat{\omega}(\pmb{\xi})}\ge {G_{n}},
   \end{equation}
   where 
   $G_{n}$ the unique positive root of the equation
  $$
   x^{n-1} =\frac{\hat{\omega}}{1-\hat{\omega}} (x^{n-2} + x^{n-3} + \dots+ x+1),
$$
which is valid for all totally irrational $ \pmb{\xi}\in \mathbb{R}^n$.
This result was proven in \cite{MaMo}, and a short and beautiful proof
was given recently in \cite{Ng}. 

Note that as $\frac{1}{n}\le \hat{\omega} \le 1,$ we have $G_n \ge 1$
and $G_n=1$ if and only if $\hat{\omega} = \frac{1}{n}$. 
We see that $G_n$ is also a root of the simpler equation
$$
(1-\hat{\omega} )x +\frac{\hat{\omega}}{x^{n-1}}= 1
\,\,\,
\Longleftrightarrow
\,\,\, g(x) \df 
(1-\hat{\omega}) x^n - x^{n-1}+\hat{\omega}= 0.
 $$
The polynomial $  g(x)$ in the interval $ x\ge1$ has the unique root $G_n$. Hence
$$x\ge G_n \Longleftrightarrow  (1-\hat{\omega}) x^n - x^{n-1}+\hat{\omega} \ge 0.
$$
Moreover,  since 
$$
\max_{\frac{1}{n}\le z\le 1}
\left( z(1-z)^{n-1}\right) = \frac{1}{n}\cdot \left( 1-\frac{1}{n}\right)^{n-1},
$$
we have
$$
\hat{\omega}(1-\hat{\omega})^{n-1}\le \frac{1}{n}\cdot \left( 1-\frac{1}{n}\right)^{n-1}.
$$
Therefore
$g\left(\frac{1}{1-\hat{\omega}} \cdot \frac{n-1}{n}
\right)
\le 0,
$
and we deduce that 
$G_n\ge \frac{1}{1-\hat{\omega}} \cdot \frac{n-1}{n}$.

Now (\ref{pp}) leads to
$$
(1-\hat{\omega}) \omega^n - \omega^{n-1}\hat{\omega} +\hat{\omega}^{n+1}\ge 0.
$$
The function
$$
f(y) \df 
(1-\hat{\omega}) y^n - y^{n-1}\hat{\omega} +\hat{\omega}^{n+1}
$$
increases when $  y\ge \frac{\hat{\omega}}{1-\hat{\omega}} \cdot \frac{n-1}{n}$.
Since $ \omega\ge \hat{\omega}\cdot G_n \ge
\frac{\hat{\omega}}{1-\hat{\omega}} \cdot \frac{n-1}{n}, $ we  deduce from (\ref{BADB})  the inequality
$$
(1-\hat{\omega}) w_{s,n}^n - w_{s,n}^{n-1}\hat{\omega} +\hat{\omega}^{n+1}\ge 0
$$ 
which immediately gives (\ref{BAD2}).
\end{proof}

\ignore{ \begin{remark}
$G_{n}$ is a root of the simpler equation
$$
(1- \hat{\omega})x +\frac{\hat{\omega}}{x^{n-1}} = 1,
$$
or  equation
$$
(1- \hat{\omega})x^n -{x^{n-1}} +\hat{\omega}=0.
$$
\end{remark}}

\subsection*{Acknowledgements}
The authors were supported by 
NSF grant DMS-1600814,
 RFBR grant No.~18-01-00886 and BSF grant 2016256, respectively. This work was started
during the second-named author's visit to Brandeis University, whose
hospitality is gratefully acknowledged.  The authors are also grateful
to the MATRIX institute for providing a stimulating atmosphere for a
productive collaboration, and to the anonymous referee for taking a
close look at the paper and catching some errors.


\end{document}